\renewcommand*{\backref}[1]{}
\renewcommand*{\backrefalt}[4]{%
  \ifcase #1 %
    No citations.
  \or
    (page #2).%
  \else
    (pages #2).%
  \fi%
}
\theoremstyle{definition} 
\newtheorem{thm}{Theorem}[section]      
\newtheorem{definition}[thm]{Definition}
\newtheorem{prop}[thm]{Proposition}
\newtheorem{lemma}[thm]{Lemma}
\newtheorem{corollary}[thm]{Corollary}
\newtheorem{remark}[thm]{Remark}
\newtheorem{example}[thm]{Example}
\newtheorem{conj}{Conjecture}
\newcommand{\ini}{\mathrm{in}}
\newcommand{\vol}{\mathrm{Volume}}
\newcommand{\dirr}{P(\ZZ^2)}
\newcommand{\dirrk}{P(\ZZ^k)}
\newcommand{\dird}{P(\D)}
\newcommand{\Inf}{\mathfrak{Infl}}
\newcommand{\I}{\mathfrak{I}}
\newcommand{\D}{\Delta}
\newcommand{\ZZ}{{\mathbb Z}}
\newcommand{\NN}{{\mathbb N}}
\newcommand{\KK}{{\mathbb K}}
\newcommand{\CC}{{\mathbb C}}
\newcommand{\RR}{{\mathbb R}}
\newcommand{\FF}{{\mathbb F}}
\newcommand{\TT}{{\mathbb T}}
\newcommand{\A}{{\mathcal A}}
\newcommand{\val}{\mathrm{val}}
\newcommand{\Val}{\mathrm{Val}}
\newcommand{\Trop}{\mathrm{Trop}}
\newcommand{\area}{\mathrm{area}}
\newcommand{\mult}{\mathrm{mult}}
\newcommand{\Star}{\mathrm{Star}}
\begin{document}

\title{Tropical approach to Nagata's conjecture in positive characteristic}

\date{\today}

\author[N. Kalinin]{Nikita Kalinin}

\address{Departamento de Matem\'aticas, CINVESTAV
Apartado Postal: 14-740, C.P. 07000,Ciudad de M\'exico, Mexico} 


\email{nikaanspb\{dog\}gmail.com}

\begin{abstract}

Suppose that there exists a hypersurface with the Newton polytope $\Delta$,
which passes through a given
set of subvarieties. Using tropical geometry, we associate a subset of $\Delta$ to each of these subvarieties. We prove that a weighted sum of the volumes of these subsets
estimates the volume of $\Delta$ from below.

As a particular application of our method we consider a planar algebraic curve $C$ which passes through generic points $p_1,\dots,p_n$ with
prescribed multiplicities $m_1,\dots,m_n$. Suppose that the minimal lattice width  $\omega(\Delta)$  of the Newton polygon $\Delta$ of the curve $C$ is at least $\max(m_i)$. Using tropical floor diagrams (a certain degeneration of $p_1,\dots, p_n$ on a horizontal line)  we prove that $$\mathrm{area}(\Delta)\geq \frac{1}{2}\sum_{i=1}^n m_i^2-S,\ \  \text{where } S=\frac{1}{2}\max \left(\sum_{i=1}^n s_i^2 \Big| s_i\leq m_i, \sum_{i=1}^n s_i\leq \omega(\Delta)\right).$$

In the case $m_1=m_2=\ldots =m\leq \omega(\Delta)$ this estimate becomes
$\mathrm{area}(\Delta)\geq \frac{1}{2}(n-\frac{\omega(\Delta)}{m})m^2$. That rewrites as $d\geq (\sqrt{n}-\frac{1}{2}-\frac{1}{2\sqrt n})m$ for the curves of degree $d$.

We consider an arbitrary toric surface (i.e. arbitrary $\Delta$) and our ground field is an infinite field of any characteristic, or a finite field large enough. The latter constraint arises because it is not {\it \`a priori} clear what is {\it a collection of generic points} in the case of a small finite field. We construct such collections for fields big enough, and that may be also interesting for the coding theory. 

\end{abstract}

\keywords {Nagata's conjecture, {\it m}-fold point, floor diagrams, tropical geometry}

\maketitle


\section{Main Theorem and a discussion around Nagata's conjecture}

It is easy to find a polynomial in one variable with prescribed
values at given points. Then, it is not difficult to find a polynomial in
many variables with prescribed values at given points, or to find a polynomial in one
variable with prescribed higher derivatives at given points. Each of
the conditions appearing above imposes one linear constraint on the
polynomial's coefficients. When we have only linear constraints, it is natural to ask whether they are mutually independent. In the above problems it is indeed the case, but in general it is not always true and can be a source of major difficulties.  

Consider the following general question: given  natural numbers  $m_1,m_2,\dots,m_n$ and a set of varieties
$X_1,X_2,\dots,X_n\subset \FF^k$ (where $\FF$ is a field of any characteristic), we are wondering
if there exists a hypersurface $Y\subset \FF^k$ (with a given Newton
polytope $\D$) which passes through each of the $X_i$ with the multiplicity
$m_i$ respectively. This procedure (defining a variety by incidence and tangence relations) helps in constructing concrete examples and counter-examples in the realm of singular varieties. 

This paper promotes the tropical point of view on the above problem. We define
{\it the subsets} $\Inf(X_i)$ of $\Delta$, ``influenced'' by each of the $X_i$. These subsets
can overlap, but no more than $k$ at once (Corollary~\ref{cor_volume}). We mainly concentrate on the case $k=\dim Y+1 = 2$, i.e. $Y$ is a planar algebraic curve and each of the $X_i$ is a point. 

\subsection{Main Theorem}
A vector $(u_1,u_2)\in \ZZ^2$ is {\it primitive} if $\mathrm{gcd}(u_1,u_2)=1$. In particular, a primitive vector $(u_1,u_2)$ cannot be $(0,0)$. Denote by $P(\ZZ^2)$ the set of primitive vectors in $\ZZ^2$.
The lattice width $\omega_u(\D)$ of a polygon $\Delta\subset \ZZ^2$ in a direction $u=(u_1,u_2)\in P(\ZZ^2)$ is $\max\limits_{x,y\in \D}(u_1,u_2)\cdot (x-y)$. 

\begin{definition}
 The minimal lattice width $\omega(\D)$ of a polygon $\Delta\subset \ZZ^2$ is $\min\limits_{u\in P(\ZZ^2)}\omega_u(\D).$
\end{definition}

The following theorem is an application of general methods developed in this article, bred with results of \cite{kalinin}.

\begin{thm}
\label{main_theorem} Let $\FF$ be an infinite field or a field big enough (for details see Lemma~\ref{lemma_detrop}). If $\omega(\Delta)\geq \max(m_i)$ and for
each set of points $p_1,p_2,\dots,p_n\in (\FF^*)^2$ there exists an algebraic  curve $C\subset (\FF^*)^2$ with the Newton polygon $\D$, passing through $p_1,p_2,\dots,p_n$ with multiplicities
$m_1,m_2,\dots,m_n$ correspondingly, then 
\begin{equation}
\label{eq_main}
\mathrm{area}(\Delta)\geq \left(\frac{1}{2}\sum_{i=1}^n m_i^2 \right)- S\big( m_1,m_2,\dots,m_n,\omega(\D)\big).
\end{equation} 
\end{thm}
The correction term $S$ in Eq.\eqref{eq_main} is given by the following formula.
\begin{definition}
For a set $m_1,m_2,\dots,m_n\in\ZZ_{>0}$ we define 
\begin{equation}
\label{eq_correction}
S(m_1,\dots,m_n,M)=\frac{1}{2}\max \left(\sum_{i=1}^n s_i^2\right)
\end{equation} where we maximize by all sets of numbers $\{s_i\}_{i=1}^n$ such that  $0 \leq s_i\leq m_i, \sum_{i=1}^n s_i\leq M$.
\end{definition}
\begin{example}
\label{ex_simplest}
If $m_1=m_2=\dots=m_n=m, M=[\sqrt n]m$, then $S(m_1,\dots,m_n,M)=\frac{1}{2}[\sqrt n]m^2$.
\end{example}

Indeed, if for $a,b\geq 0$ the sum $a+b$ is fixed, then $a^2+b^2$ is bigger when $a$ and $b$ are maximally far from each other. Using this example we obtain the following corollary of Theorem~\ref{main_theorem}.

\begin{corollary}
Under the hypothesis of Theorem~\ref{main_theorem}, if $m_1=m_2=\dots=m_n=m, \omega(\D)=[\sqrt n]m$, then $\mathrm{area}(\Delta)\geq \frac{1}{2}\left(n-[\sqrt n]\right)m^2$.
\end{corollary}

\begin{corollary} Under the hypothesis of Theorem~\ref{main_theorem},  suppose that $$m_1=m_2=\dots=m_n=m\leq \omega(\D).$$ Then, the following inequality takes place: $\area(\D)\geq \frac{1}{2}(n-\frac{\omega(\D)}{m})m^2$. 
\end{corollary}
\begin{proof}
We are seeking for the minimum of $nm^2-2S(m,\dots,m,\omega(\Delta))=\sum_{i=1}^n (m^2-s_i^2)$ under conditions
$\sum s_i\leq \omega(\D),s_i\leq m$. Choose $k,k'\in\ZZ$ such that $\omega(\D)=mk+k', 0\leq k'<m$. We see, using the argument in Example~\ref{ex_simplest}, that the minimum is attained when 
$$s_j=m, \text{if }1\leq j\leq k, \text{ and } 0\leq s_{k+1}=k'<m, \text{ and } s_{j}=0 \text { for } j>k+1.$$

Hence $\sum_{i=1}^n(m^2-s_i^2)\geq nm^2-km^2-{k'}^2$. Therefore, $$\area(\D)\geq\frac{1}{2}\big(nm^2-km^2-{k'}^2\big)\geq \frac{1}{2}\left(n-\frac{\omega(\D)}{m}\right)m^2,$$
because $k+\frac{k'^2}{m^2}\leq k+ \frac{k'}{m}<k+1$.
The equality in the right hand side takes place if $k'=0$.
\end{proof}

\begin{corollary}
If $\Delta$ is the triangle $\mathrm{ConvHull}\big((0,0),(d,0),(0,d)\big)$, then the above corollary gives $d\geq (\sqrt{n}-\frac{1}{2}-\frac{1}{2\sqrt n})m$.
\end{corollary}
\begin{proof}Indeed, $\area(\Delta)=\frac{d^2}{2}$. So, we have $d^2\geq (n-d/m)m^2$. If $d\geq m\sqrt n$, then we are done. Suppose that $d< m\sqrt n$, then $$d^2\geq (n-d/m)m^2\geq (n-\sqrt n)m^2\geq \left(\sqrt n- \frac{1}{2} -\frac{1}{2\sqrt n}\right)^2m^2,$$
because $(\sqrt n- \frac{1}{2} -\frac{1}{2\sqrt n})^2=n-\sqrt n - (1-1/4-1/{4n} -1/{2\sqrt n})\leq n-\sqrt n$.
\end{proof}

\subsection{The idea of proof}
Let $\KK$ be the field of rational functions over $\FF$.
Since each element of $\KK$, except zero, can be written as $$\left\{\sum\limits_{m}^{\infty} c_kt^k|m\in\ZZ,c_k\in\FF\right\},$$  we can define a {\it valuation map} $\val: \KK \to \TT$ by the rule $\val\left(\sum\limits_{m}^{\infty} c_kt^k\right):=-\min\{k,c_k\ne 0\}$ and 
$\val(0):=-\infty$. In other words, $\val(f)$ is minus the order of vanishing of $f$ at zero. There are different extensions of $\KK$, algebraically closed, with surjective map $\val$; one can take a field of power series whose elements converge near zero, etc; see \cite{puiseux, puiseaux2}.

We prove that Theorem~\ref{main_theorem} holds over
this valuation field $\KK$. We use the nature of a
singular point's influence on the Newton polygon of a curve (this ``influence'' means that a part  $\Inf(p_i)$ of $\Delta$ corresponds to each point $p_i$, see Section~\ref{sec_influence}) and
 tropical floor diagrams \cite{floor2,floor1}. Tropical floor diagrams illustrate the process of a degeneration of the points $p_1,\dots,p_n$ on a horizontal line, in a sense it is a tropical version of the Horace method \cite{evain}. While degenerating
$p_1,p_2,\dots, p_n$ onto a line, we see the following behavior (Figure \ref{fig:main}) of
the points on the tropical picture. Each point of multiplicity $m_i$ splits into two parts $m_i=s_i+r_i$, such that $\sum_{i=1}^n s_i\leq \omega(\D)$. Furthermore, we choose a part of $\Inf(p_i)\subset\Delta$ for each $i=1,\dots,n$; these parts do not intersect, and the area of such a part for a point $p_i$ is at least $\frac{1}{2}(m_i^2-s_i^2)$. 

Then we prove {\it Detropicalization lemma}. It says that if Theorem~\ref{main_theorem} holds over $\KK$ and does not hold over $\FF$, then there exists a non-zero polynomial of bounded degree which has all points of $\FF$ as its roots. This implies that there exists a constant $N\in\NN$ such that if the cardinality of $\FF$ is at least $N$ (which is always the case if $\FF$ is infinite), then Theorem~\ref{main_theorem} holds for $\FF$. This is a natural restriction, because in small fields we
cannot find a sufficiently generic collection of points. The constant $N$, then, depends on $\max(m_i),\D$, and $\mathrm{char}(\FF)$. This reasoning could be of a particular interest to coding theory, see Section \ref{codetheory}.

\subsection{Nagata's conjecture}
 
Let us fix a field $\FF$. For a point $p = (x_1,y_1)\in \FF^2$ we
denote by $I_p$ the ideal of the point $p$, namely $I_p=\langle x-x_1,y-y_1\rangle$.

\begin{definition}
Consider an algebraic curve $C$ given by an equation $F(x,y)=0, F\in \FF[x,y]$. We say
that $p$ is {\it of multiplicity at least $m$} for $C$ (and write $\mu_p(C)\geq m$), if $F\in
(I_p)^m$.
\end{definition}

In the most non-degenerate case,  ``$p$ is a point of multiplicity $m$
on $C$'' means that there are $m$ branches
of $C$ passing through $p$. For fields of characteristic zero, two following conditions are equivalent: 1) $F\in (I_p)^m$, and 2) all the partial derivatives of $F$ up to order $m-1$ vanish at $p$.

\begin{example}
\label{example_sing}
Consider a planar algebraic curve $C$ of degree $d$ given by an
equation $F(x,y)=0$, where  $$F(x,y)=\sum\limits_{i,j\geq 0, i+j\leq
  d}a_{ij}x^iy^j.$$ The point $p=(0,0)$ is of multiplicity $m$ for $C$ if
and only if for all $i,j\geq 0$ with $i+j<m$ we have $a_{ij}=0$.  As a consequence,
for each point $p\in \FF^2$ the condition $\mu_p(C)\geq m$ can be
rewritten as a system of $\frac{m(m+1)}{2}$ linear equations in the
 coefficients $\{a_{ij}\}$ of $F$.
\end{example}

Let $p_1,\dots, p_n$ be a collection of $n>9$ points in $\FF^2$ and  $m_1,\dots, m_n\in \NN$. We are looking for the
minimal degree $d_{min}$ of an algebraic curve passing through $p_1,\dots, p_n$ with
multiplicities at least $m_1,\dots, m_n$ respectively.

One can naively calculate the expected dimension
$\mathfrak{edim}(d,m_1,\dots,m_n)$ of the space $\mathfrak{S}$ of the
curves of degree $d$ satisfying the hypothesis above. Indeed, each singular point imposes $\frac{m(m+1)}{2}$ constraints on the coefficients of
the curve equation, therefore, $$\mathfrak{edim}(d,m_1,\dots,m_n) = \max\left(-1, \frac{d(d+3)}{2}-\sum\limits_{i=1}^{n} \frac{m_i(m_i+1)}{2}\right).$$

The actual dimension of $\frak{S}$  is always at least the
expected one, because all the constraints are linear. However, sometimes even for a generic choice of the set of points $p_1,p_2,\dots,p_n$ the
actual dimension is strictly greater than the expected one. 

As a reasonable estimate for $d_{min}$,  Nagata's conjecture claims:

\begin{conj} If points $p_1,\dots,p_n\in \mathbb P^2, n>9$ are chosen generically and $d\leq\frac{1}{\sqrt{n}}\sum\limits_{i=1}^{n}
  m_i$, then $\dim\frak{S}=-1$. In other words, $d_{min}>\frac{1}{\sqrt{n}}\sum\limits_{i=1}^{n}
  m_i$.
\end{conj}

\begin{example}
\label{ex1}
Let us consider two points $p_1,p_2$. The minimal degree of a curve
passing through $p_1,p_2$ with multiplicities $m_1,m_2$ is
$m_1$, if $m_1\geq m_2$:  it is the line passing through $p_1$ and $p_2$ taken
with multiplicity $m_1$. So the inequality $d_{min}\geq
\frac{m_1+m_2}{\sqrt{2}}$ in Nagata's conjecture is not satisfied
as long as $m_2>m_1(\sqrt{2}-1)$. For five generic points $p_i$ with multiplicities $m_i,i=1,\dots,5$, one can draw a conic through $p_1,p_2,p_3,p_4,p_5$, and take it with multiplicity $\max (m_i)$. In the case all $m_i=m$, this example also violates the inequality in Nagata's conjecture. A classification for the case of less than ten points can be found in (\cite{MR2098342}, Example~2.4, \cite{ciliberto}, Proposition~5.8, Theorem~5.9).
\end{example}

The case $n=l^2$ had been proven by Nagata himself \cite{nagata}. These days, even the case $n=10$
and $m_1=m_2=\dots=m_{10}=m$ is under
exhaustive study (\cite{ten}), but has not yet been
proven. The similar questions in higher dimensions are widely open
(cf. \cite{bocci2004special},\cite{dumnicki2014linear}). The pictures appeared in our approach are somewhat similar to those in \cite{MR3003317}, though the relation is not direct.

Historically Nagata's conjecture  appeared
as a tool (with $n=16$) to disprove Hilbert 14th problem. Related to Nagata's conjecture is the
Segre-Harbourne-Hirschowitz's conjecture which basically says that if the
expected dimension $\mathfrak{edim}$ of $\frak{S}$ is not equal to the actual one,
then the linear system $\frak{S}$ contains a rational curve in its
base locus. The reader is kindly referred to look into surveys
\cite{ciliberto, survey1,harbourne,miranda} for an
introduction to Nagata's conjecture and related topics.

In view of Theorem \ref{main_theorem} the following three results should
be mentioned:

{\bf Theorem} (\cite{xu}, Xu). {\it If $C$ is a reduced and irreducible curve of degree $d$
passing through generically chosen
points $p_1,p_2,\dots,p_n\in \CC P^2$ with multiplicities
$m_1,m_2,\dots,m_n$ respectively, then the estimate $d^2\geq \sum_{i=1}^nm_i^2 -
\min (m_i)$ holds.}

Xu's theorem can be {\it verbatim} extended to curves with arbitrary Newton polygons, and to reducible and non-reduced curves. But unlike Xu's theorem, which requires characteristic zero, in Theorem~\ref{main_theorem} we consider curves defined over fields of any characteristic. 

{\bf Theorem} (\cite{alexander}, Alexander, Hirschowitz). {\it The dimension of
the space of degree $d>2$ hypersurfaces in $\CC P^k (k\geq 3)$, passing through
generic points $p_1,p_2,\dots,p_n$ with multiplicities
$m_1=\dots=m_n=2$, is the expected one except for the cases
$(k,d,n)=(2,4,5),(3,4,9),(4,4,14),(4,3,7)$.}

Using the classification of tropical singular surfaces in \cite{markwig2}, we give a sketch of a proof that the volume $V$ of the Newton polytope of a surface in $\CC P^3$ with $n$ two-fold points in {\bf general} position satisfies $n\leq 2V$. Using the above theorem we can obtain a better estimate, see Remark~\ref{rem_surface} for details.

{\bf Theorem} (\cite{alexander2}, Alexander, Hirschowitz). {\it For each field $\FF$,
  the dimension of degree $d$ hypersurfaces in $\FF P^k$ passing through
generic points $p_1,p_2,\dots,p_n$ with multiplicities
$m_1,m_2,\dots,m_n$ is the expected one if $d\gg\max m_i$}.

We expect that our approach can be extended to the cases $k\geq 3$ and $m_i>2$. Such an extension will lead to an explicit degree estimate in these cases.

Research is supported by the grant 168647 (PostDoc.Mobility) of the Swiss National Science Foundation. I would like to thank the anonymous reviewers for numerous suggestions which improved this paper a lot.

\section{Preliminaries in tropical geometry}
\label{sec_general}

In this section we recall some definitions and set up the notation. We
  refer the reader to \cite{BIMS},\cite{maclagan2015introduction} for a general introduction to
  tropical geometry. 


Let $\TT$ denote $\RR\cup \{-\infty\}$, and $\KK$ be a field with a
valuation map $\val:\KK\to\TT$. We use the convention $\val(a+b)\leq
\max(\val(a),\val(b)), \val(0)=-\infty$. Usually $\TT$ is called the {\it tropical
  semi-ring}.  

Consider a hypersurface $Y\subset \KK^k$. Let $Y$ be given by an equation
$$F(x_1,x_2,\dots,x_k)=0,$$ $$F=\sum_{I\in\A}c_Ix^I,
 I=(i_1,i_2,\dots,i_k), c_I\ne 0.$$ In such case $\D=\mathrm{ConvexHull}(\A)$ is called
{\it the Newton polytope} of $Y$.

The Newton polytope of $F$ is provided with a subdivision deined by the coefficients of $F$.
Namely, consider the extended Newton polytope of $Y$,
$$\widetilde\D=\mathrm{ConvexHull}\{(I,x)\in \ZZ^k\times \TT|I\in \A, x\leq \val(c_I))\}.$$
The projection of the faces of the extended Newton polytope $\widetilde\D$ onto 
the Newton polytope $\D$ defines a subdivision of $\D$.

We give a definition of the tropicalization of $Y$, based on its
equation $F(x)=\sum_{I\in\A}c_Ix^I$. For a {\it weight $\omega=(w_1,w_2,\dots,w_k)\in \TT^k$} we consider
the {\it weight function} $$\omega (cx_1^{i_1}x_2^{i_2}\dots x_k^{i_k})
:= \val(c)+i_1w_1+i_2w_2+\dots+i_kw_k.$$ Then we define
initial part $\ini_\omega(F)$ of $F$ as the 
$\omega$-maximal part of $F$. Namely, we find $W$, the maximal weight (with respect to $\omega$) among monomials of $F$. Then, by definition, $\ini_\omega(F)$ is the sum of all monomials of $F$ with weight $W$ with respect to $\omega$. Finally, we define
$\Trop(Y)\subset \TT^k$ to be the set of all weights $\omega$ such that $\ini_\omega(F)$
is not a monomial. This is same as define $\Trop(Y)$ as the corner locus of the function $$\Trop(F)(\omega) = \max_{I\in\A}(\val(c_I+ I\cdot\omega)).$$


The set $\Trop(Y)$ is a polyhedral complex. Indeed, for each subset $S$ of the set of monomials of $F$ we can consider the set $S^*$ of $\omega\in\TT^k$ such that the set of monomials in $\ini_\omega(F)$ is $S$. One can prove that each of sets $S^*$ is convex. Therefore this defines a polyhedral subdivision of $\TT^k$, whose cells are parametrized by subsets $S$ of the set of monomials of $F$. Adopting this point of view we see that $\Trop(Y)$ is the union of cells in the above subdivision, which correspond to $S$ with $|S|\geq 2$.

This ({\it Gr\"obner}) subdivision of $\TT^k$ is related with the subdivision (described earlier) of the Newton polytope $\Delta$ in the following way. A
point $I\in\D$ is a vertex of the subdivision of $\Delta$ if there exists such a
weight $\omega_0\in\TT^k$ that $\ini_{\omega_0}(F) = c_Ix^I$. So we say that the cell $\{\omega'\in\TT^k | \ini_{\omega_0}(F)=\ini_{\omega'}(F)=c_Ix^I\}$ is dual to the vertex $I$ in the subdivision of $\D$.

An interval
$I_1I_2$ between two vertices $I_1,I_2\in\D$ is an edge of the subdivision of $\Delta$ if there exists a
weight $\omega_0$ such that $\ini_{\omega_0}(F) = \sum_{I\in J}
c_Ix^I$ where the convex hull of $J$ is the interval $I_1I_2$. Again, we say that the cell $\{\omega'\in\TT^k| \ini_{\omega'}(F)=\ini_{\omega_0}(F)\}$ is dual to the interval $IJ$ of the subdivision of $\Delta$. We further define this duality between the cells of dimension $i$ in the subdivision of $\Delta$ and cells of dimension $k-i$ in the subdivision of $\TT^k$ similarly.

We can summarize the above arguments as follows.
\begin{remark}\label{rem_duality}
Each cell of the subdivision of $\D$ is of the type 
$$\Delta_\omega = \mathrm{ConvexHull(support(}\ini_\omega(F)))$$ for some
$\omega\in \TT^k$. We recall that the support of a polynomial is the set of multipowers of its monomials. For example, $\mathrm{support}(x^2+t^{-2}xy+3y^3)$ is the set $\{(2,0),(1,1),(0,3)\}$.
\end{remark}

\begin{remark}
If $Y\subset \KK^k$ is a hypersurface, then $\Trop (Y)\subset \TT^k$ is a polyhedral complex of
codimension one. For each cell
$\Delta_\omega\subset\D$ of the subdivision of the Newton polytope $\Delta$ of $Y$ we define $d(\Delta_\omega) =\{\omega'\in \TT^k|
\Delta_\omega = \Delta_{\omega'}\}$. 

This map $d$ provides the following correspondence: the vertices of the subdivision of $\D$ correspond to the
connected components of the complement of $\Trop(Y)$ in $\TT^k$, the edges of the
subdivision correspond to the faces of $\Trop(Y)$ of the maximal
dimension, the two-cells of the subdivision correspond to the faces of
codimension one in $\Trop(Y)$, etc. A particular example of such a duality for $k=2$ is presented in Figure~\ref{governorship}. Since $d$ is a bijection between cells, abusing notation we also write $\Delta_\omega =d(\{\omega'\in \TT^k|
\Delta_\omega = \Delta_{\omega'}\})$.
\end{remark}

\begin{definition}
If $X\subset \KK^n$ is a variety of higher codimension, we define its tropicalization $\Trop(X)$ as follows. Let $I$ be the ideal of $X$. Let $\ini_\omega(I)$ be the ideal generated by the elements $\ini_\omega(f),f\in I$. Then, by definition, $\omega\in\Trop(X)$ if and only if $\ini_\omega(I)$ is monomial free.
\end{definition}
 A proof that $\Trop(X)$ is a polyhedral complex repeats the above arguments for the case of hypersurface, see \cite{maclagan2015introduction} for details. 

\section {An estimate of a singular point's influence on the Newton
  polygon of a curve}
\label{sec_influence} 
In this section we cover facts from \cite{kalinin} that we need. Let $C$ be a curve over a valuation field $\KK$ with the Newton polygon $\D$ such that $\omega(\D)\geq m$. Let $Q\in\Trop(C)$.

\begin{figure}[htb]
\begin{center}
\begin{subfigure}[b]{0.3\textwidth}
\begin{tikzpicture}[scale=0.3]
\draw [very thin, gray] (0,-1) grid (15,11);
\draw[very thick] (1,10)--(10,10)--(12,9)--(14,6)--(14,0)--(2,1)--cycle;
\draw[very thick]
(2,6)--(3,6)--(4,5)--(6,8)--(8,9)--(10,8)--(12,7)--(13,6)--(12,6)--(10,5)--(8,3)--(6,4)--(4,3)--(3,5)--(2,6);

\draw[very thick] (3,6)--(3,5);
\draw[very thick] (4,5)--(4,3);
\draw[very thick] (6,8)--(6,4);
\draw[very thick] (8,9)--(8,3);
\draw[very thick] (10,8)--(10,5);
\draw[very thick] (12,7)--(12,6);
\end{tikzpicture}
\end{subfigure}
\quad\quad\quad
\begin{subfigure}[b]{0.3\textwidth}
\begin{tikzpicture}[scale=0.3]
\draw [very thin, gray] (0,0) grid (16,12);
\draw[very thick] (2,0)--(13,0)--(16,2)--(16,9)--(8,12)--(1,8)--cycle;
\draw[very thick]
(3,7)--(4,7)--(8,11)--(8,12)--(10,11)--(11,10)--(12,8)--(13,8)--(16,5)--(13,5)--(12,4)--(12,2)--(10,2)--(10,0)--(9,0)--(7,2)--(5,5)--cycle;

\draw[very thick] (4,7)--(4,6);
\draw[very thick] (5,8)--(5,5);
\draw[very thick] (8,11)--(10,11);
\draw[very thick] (7,10)--(11,10);
\draw[very thick] (12,8)--(12,4);
\draw[very thick] (13,8)--(13,5);
\draw[very thick] (14,7)--(14,5);
\draw[very thick] (10,2)--(12,4);
\draw[very thick] (11,2)--(12,3);
\draw[very thick] (7,2)--(10,2);
\draw[very thick] (8,1)--(10,1);

\draw[very thick] (5,8)--(6,10)--(7,10);
\draw[very thick] (11,10)--(12,10)--(12,8);
\draw[very thick] (5,5)--(4,3)--(7,2);
\end{tikzpicture}
\end{subfigure}
\end{center}
\begin{center}
\begin{subfigure}[h]{0.3\textwidth}
\begin{tikzpicture}[scale=0.3]
\path (0,0);
\begin{scope}[shift={(0,-5.5)}]
\draw (0,0) node {$\bullet$};
\draw (0,0) node[above] {$Q$};

\draw[very thick] (-7,0)--(5,0);
\draw[very thick] (2,2)--(1,0)--(2,-1);
\draw[very thick] (4,2)--(3,0)--(4,-2);
\draw[very thick] (6,1)--(5,0)--(5,-1);
\draw[very thick] (-3,2)--(-2,0)--(-3,-2);
\draw[very thick]  (-4.75,0.5)--(-4,0)--(-3.5,-2);
\draw[very thick]  (-5.5,0.5)--(-6,0)--(-6,-0.5);
\draw[very thick] (-7,1)--(-7,0)--(-7.5,-0.5);
\end{scope}
\end{tikzpicture}
\vspace{23pt}
\caption{if $Q$ is not a vertex}
\label{governorship_1}
\end{subfigure}
\quad\quad\quad
\begin{subfigure}[ht]{0.3\textwidth}
\begin{tikzpicture}[scale=0.3]
\path (-5,0);
\begin{scope}[shift={(26mm,0)}]
\draw (0,0) node {$\bullet$};
\draw (0,0) node[above right] {$Q$};
\draw[very thick] (0,0)--(0,2)--(1,3);
\draw[very thick] (-1,3)--(0,2)--(0,4)--(-1,4);
\draw[very thick] (0,4)--(0.5,5);
\draw[very thick] (0,0)--(2,1)--(3,1);
\draw[very thick]  (2,1)--(2,2);
\draw[very thick]  (0,0)--(-2,2)--(-2,3);
\draw[very thick]  (-2,2)--(-4,3);
\draw[very thick] (0,0)--(3.5,0)--(3.5,0.5);
\draw[very thick] (4,-0.5)--(3.5,0)--(5,0)--(5,-1);
\draw[very thick]  (6,1)--(5,0)--(6,0)--(7,1);
\draw[very thick] (6,0)--(6,-1);
\draw[very thick]  (0,0)--(-3,0)--(-4,1);
\draw[very thick] (-4,-1)--(-3,0)--(-5,0)--(-5,1);
\draw[very thick] (-5,0)--(-6,-1);
\draw[very thick] (0,0)--(-3,-2)--(-4,-1.5);
\draw[very thick] (-3,-2)--(-4,-5);
\draw[very thick]  (0,0)--(0,-2.5)--(1,-2.5);
\draw[very thick] (-1,-3.5)--(0,-2.5)--(0,-4)--(1,-4);
\draw[very thick]  (-1,-5)--(0,-4)--(0,-5);
\draw[very thick]  (0,0)--(1.5,-1.5)--(2.5,-1.5);
\draw[very thick] (1.5,-2.5)--(1.5,-1.5)--(2.5,-2.5)--(3.5,-2.5);
\draw[very thick] (2.5,-3.5)--(2.5,-2.5);
\end{scope}
\end{tikzpicture}
\caption{if $Q$ is a vertex}
\label{governorship_2}
\end{subfigure}
\end{center}

\caption{If $Q$ is not a vertex of $\Trop(C)$ (left column), then the
  collection $\I(Q)$ of vertices consists of all the vertices of
  $\Trop(C)$ lying on the extension of the edge through $Q$.  If $Q$ is a vertex of $\Trop(C)$ (right column), then $\I(Q)$ is the set of the vertices on the extensions of all the edges through $Q$. In each case the
  corresponding set $\Inf(Q)$ of faces of the subdivision of
  $\Delta$, the ``region of influence'' of $Q$, is drawn at the top.}
\label{governorship}

\end{figure}

\begin{definition}\label{defi_lq}Let $l_Q(u)$ be the line through $Q$ in the direction $u\in\dirr$. 
Take the connected component, containing $Q$, of the intersection
  $\Trop(C)\cap l_Q(u)$. We call this component {\it the long edge
    through $Q$ in the direction $u$} and denote it by $E_Q(u)$. 
\end{definition}

\begin{definition}
\label{def_I}
For each $u\in\dirr$ we denote by $\I_Q(u)$ the set of vertices of $\Trop(C)$ which belong
to the long edge $E_Q(u)$. Define $\I(Q)=\bigcup_{u\in\dirr}\I_Q(u)$.
\end{definition}

Note that $\I(Q)$ is not a multiset; it contains only one copy of
$Q$.  Examples of $\I(Q)$ are presented in Figure \ref{governorship}. 
On the left we see one long edge $E_Q((1,0))$ and
$\I(Q)$ consists of 7 vertices, and above we see 7 corresponding
faces in the subdivision of $\Delta$. On the right,  we see long edges
$E_Q((1,0)),E_Q((0,1)),E_Q((-1,1))$. Each of the long edges 
$E_Q((1,2))$ and $E_Q((-3,-2))$ consists of only one edge. 

\begin{definition}[\cite{kalinin}]
For a point $Q\in \Trop(C)$ we define the {\it region of influence of $Q$} $$\Inf(Q)=\bigcup\limits_{V\in \I(Q)}d(V),$$ the union of the faces of the Newton polygon of $\Trop(C)$, dual to the vertices in $\I(Q)$.
\end{definition}

\begin{definition}
\label{def_influence_edge}
For a point $Q
\in \Trop(C)$ which is not a vertex of $\Trop(C)$,
we define $$\area(\Inf(Q))=\sum\limits_{F\in\Inf(Q)}\area(F),$$
the sum of the areas of faces $F$ in the region of influence of $Q$, see Figure~\ref{governorship}.
\end{definition}

Note that $\area(\Inf(Q))$ depends only on $\Trop(C)$ and does not depend on a
particular choice of an equation defining $C$. Also, if $Q$ belongs to an edge $E$ of $\Trop(C)$ and $Q$ is not a vertex of $\Trop(C)$,
then $\I_Q(u)=\I(Q)$ where $u$ is the direction of $E$. Indeed, for any other direction $v$ not collinear to $u$, 
the connected component of $Q$ in the intersection $\Trop(C)\cap
l_Q(v)$ is just $Q$.

Recall that if $Q$ is a vertex of  $\Trop(C)$, then $d(Q)$ is a face dual to $Q$ in the
subdivision of $\Delta$.

\begin{definition}
\label{def_influence_vertex}
If $Q$ is a vertex of $\Trop(C)$, we define \begin{align*}\area(\Inf(Q))&=\sum\limits_{F\in \Inf(Q)}\area(F)+\area(d(Q)),\\
\area^{*}(\Inf(Q))&=\sum\limits_{F\in \Inf(Q)}\area(F).\end{align*}  
\end{definition}

\begin{thm} [\cite{kalinin}, Lemma 2.8, Theorems 1,2] 
\label{infl_theorem} Suppose that a point $p=(p_1,p_2)\in(\KK^*)^2$ is of multiplicity $m$ for
this curve $C$, $P=\Val(p)=(\val(p_1),\val(p_2))$.  Suppose also that the Newton polygon $\D$ of $C$ satisfies $\omega(\D)\geq m$. Then, 
\begin{equation}
\area(\Inf(P))\geq \frac{m^2}{2}.
\end{equation}
\end{thm}

If the point $Q$ in Figure~\ref{governorship} is as in this theorem, then the sum of the
  areas of the faces is at least $\frac{1}{2}m^2$
  in (A) and at least $\frac{3}{8}m^2 (= \area^{*}(\Inf(Q)))$ in (B) (but we will not consider $\area^*$ in this article).

\begin{example}
Consider a curve $C$ given by the equation $(x-1)^k(y-1)^{m-k}=0$,
take $p=(1,1)$. Clearly, $\mu_{p}(C)=m$, but the Newton polygon $\D$ of  $C$ violates the condition $\omega(\D)\geq m$, and the inequality $\area(\Inf(\Val(p)))=2k(m-k)\geq \frac{m^2}{2}$ does not hold except for the case $k= m/2$. 
\end{example}

\begin{lemma}[\cite{kalinin}, Lemma 1.25]
\label{lemma_width}
If $\mu_{(1,1)}(C)=m$ and for the Newton polygon $\D$ of $C$ we have $\omega_u(\D)=m-a$ for some $a>0, u=(u_1,u_2)\in P(\ZZ^2)$, then $C$
contains a rational component through $(1,1)$ parametrized as $(s^{u_1},s^{u_2})$. 
\end{lemma} 

Note that the tropicalization of such a component is the straight line though $(0,0)$ in the direction $(u_1,u_2)$. We will use this lemma in Corollary~\ref{cor_first}.

\begin{lemma}[\cite{kalinin}, Lemma 2.8,  Lemma 5.20]
\label{lemma_prep}
Let $\mu_p(C)\geq m, p=(x_1,x_2)\in(\KK^*)^2$, denote $P=(\val(x_1),\val(x_2))$. Suppose that $P$ is a vertex of
$\Trop(C)$ and $\omega_{u}(d(P))=a\leq m$ for some direction $u$.
Then both sides of $d(P)$, perpendicular to $u$, have length at least $m-a$, and
\begin{equation}
\sum\limits_{V\in
  \I_P(u),V\ne P}\area(d(V))\geq
\frac{1}{2}(m-a)^2.
\end{equation}
 \end{lemma}

\begin{example}
Suppose that $u=(1,0)$. Note that in this case  $a$ is the length of the projection of $d(P)$ onto the $x$-axis. Recall that $\I_P((1,0))$ is the set of vertices of $\Trop(C)$, lying in the connected component of $P$ in the intersection
of $\Trop(C)$ with the straight horizontal line through $P$. Figure~\ref{est41} illustrates the set of dual faces $d(V)$ to the vertices $V$ in $\I_P((1,0))$. Since the long edge through $P$ is horizontal, all the edges separating  faces in $\Inf(P)$ in Figure~\ref{est41} are vertical.
\end{example}

\begin{figure} [htbp]
\begin{center}
\begin{tikzpicture}[scale=0.5]
\draw (0,4)--(0,9)--(1,10)--(2,10)--(8,7)--(8,2)--(5,1)--(3,1)--cycle;
\draw [fill=gray](0,4)--(-1,4)--(-1,7)--(0,9)--cycle;
\draw [fill=gray](-1,4)--(-2,2)--(-2,3)--(-1,7)--cycle;
\draw [fill=gray](-2,2)--(-3,2)--(-2,3)--cycle;
\draw[<->](0,6)--(8,6);
\draw (4,6.5) node{$a$};
\draw (1.5, 8) node{$\geq m-a$};
\draw (6.5,3) node{$\geq m-a$};
\draw [fill=gray](8,7)--(9,7)--(9,3)--(8,2)--cycle;
\draw [fill=gray](9,7)--(10,8)--(10,6)--(9,3)--cycle;
\draw [fill=gray](10,8)--(11,5)--(10,6)--cycle;
\draw (3,3.5) node{$d(P)$};
\draw(0,4) node[below]{$L$};
\draw(0,9) node[above]{$M$};
\draw(8,7) node[above]{$N$};
\draw(8,2) node[below]{$K$};
\end{tikzpicture}
\qquad
\caption {Dual picture to a singular point $P$ on an edge. Since
  $\omega_{(1,0)}(d(P))=a$, the
  lengths of $LM$ and $NK$ are at least $m-a$ (Lemma~\ref{lemma_prep}).
The set $\bigcup d(V)$ for $V\in \I_{P}((1,0)), V\ne P$ is colored.
The sum of the areas of the colored faces is at least
$\frac{1}{2}(m-a)^2$.}
\label{est41}
\end{center}
\end{figure}

\begin{remark}
\label{rem_length}
Lemma~\ref{lemma_prep} holds in the degenerate case ($a=0$), too: if $P$ belongs to an edge $E$ of $\Trop(C)$, then the dual edge $d(E)$ of the subdivision of the Newton polygon of $C$ has lattice width at least $m$ (Theorem~1 in \cite{kalinin}).
\end{remark}

\begin{remark}
Consider a planar tropical curve $H$. Suppose that this curve is not a usual line. Then the Newton polygon $\Delta$ of $H$ is two-dimensional. The intersection of $H$ with a usual line in direction $u\in P(\ZZ^2)$ is equal to $\omega_u(\Delta)$. Hence, if the intersection of $H$ with each usual line of rational direction is at least $m>0$, then its self-intersection $H\cdot H$ is at least $\frac{3}{4}m^2$ which is the minimal double area of a polytope $\D$ with $\omega(\D)\geq m$, for details and references see \cite{kalinin}. 

The same reasoning works in any dimension. If the intersection of a hypersurface $H\subset \TT P^k$ with any usual line of rational direction is at least $m$, then the minimal lattice width $\omega(\D)$ of the Newton polytope $\D$ of $H$ is at least $m$ and therefore the self-intersection $H^k$ is at least $cm^k$ where $c$ is a constant which depends only on $k$. However, the best value of $c$ in the inequality $\vol(\D)\geq c \omega(\D)^k$ is not known. The following question in codimension two, therefore, is the simplest one possessing no estimate at all (because there is no notion like Newton polytope which keeps track of degree and  self-intersection at the same time).  
\end{remark}

\begin{conj} 
Consider a tropical (i.e. balanced along dimension one faces) two-dimensional fan $L$ in $\RR^4$. Suppose that $L$ is not an affine Euclidean plane. Suppose that the stable tropical intersection of $L$ with each plane of rational slope is at least $m$. Then there exists a constant $c$ such that $L\cdot L\geq cm^2$ in this case, and $c$ does not depend on $m$.
\end{conj}

\section{Influenced subsets in the Newton polytope}
In this section we generalize the definitions of influenced sets in the Newton polygon, given in the article \cite{kalinin} (see Section~\ref{sec_influence} for recap). 
Also we discuss here the notion of a set of points in $\ZZ^k$ in {\it tropical general position with
  respect to a polytope} $\D$.
  
Let $Y$ be a hypersurface in $\KK^k$ with Newton polytope $\D$. In this subsection, for a given subvariety $X\subset Y$, we define the set $\I^\D(\Trop(X))$ of
vertices of $\Trop(Y)$ and the subset $\Inf(\Trop(X))\subset \D$.

\begin{definition}We denote by $\dirrk$ the set of all primitive non-zero vectors in
  $\ZZ^k$.
An affine hyperplane with a normal direction $u\in \dirrk$ is a set $\{x\in \RR^k | u\cdot x=c\}$ with some $c\in\RR$. 
\end{definition}

Let $Q$ be a subset of $\Trop(Y)$.

\begin{definition}
\label{def_star2}
Let $l_{Q}(u)$ be the affine hyperplane in $\RR^k$ with normal direction $u$, containing the set $Q$, if such a hyperplane exists, and $l_{Q}(u)=\varnothing$, otherwise. 
Let $\dird\subset \dirrk$ be the set of the primitive vectors $\{\overline{IJ}| I,J\in\Delta\}$ between the lattice points in $\Delta$. Define the star of $Q$ (with respect to $\Delta$) as $$\Star^\D(Q)=\bigcup_{u\in\dird}l_{Q}(u).$$  
The connected component of $Q$ in the intersection $\Trop(Y)\cap \Star^\D(Q)$ is called {\it the star $\Star^\D_Y(Q)$ of $Q$ in $\Trop(Y)$}. 
\end{definition}

\begin{example} Let $Y\subset (\KK^*)^2$ be an algebraic curve whose Newton polygon is $\Delta$.
\begin{itemize} 
\item If $Q$ is a vertex of $\Trop(Y)$, then $\Star^\D_Y(Q)$ is the connected component of $Q$ in the intersection of $\Trop(Y)$ with the union of the lines spanned by the edges of $\Trop(Y)$ through $Q$, see example in Figure~\ref{governorship}, (B).  
\item If $Q\in\Trop(Y)$ is not a vertex of $\Trop(Y)$, then $\Star^\D_Y(Q)$ is the connected component of $Q$ in the intersection of $\Trop(Y)$ with the line spanned by the unique edge of $\Trop(Y)$ through $Q$, Figure~\ref{governorship}, (A).
\end{itemize}
\end{example}

\begin{definition}
\label{def_verticesstar}
Let $\I^\D(Q)$ be the set of the vertices of $\Trop(Y)$ which belong to $\Star^\D_Y(Q)$.
\end{definition}

We provide each point in $\Star^\D(Q)$ with a multiplicity corresponding to the codimension of its stratum. 
\begin{definition}
\label{def_multpoint}
For a point $V\in \Star^\D(Q)$ the natural number $\mult_Q(V)$ is the dimension of the linear span of the directions $u\in\dird$ such that the affine hyperplane through $V$ with the normal direction $u$ contains $Q$.
\end{definition}

\begin{example}
\label{ex_multi}
If $\D\subset \ZZ^2$ and $Q$ is a point, then $\Star^\D(Q)$ is a union of intervals emanating from $Q$. In this case $\mult_Q(Q)=2$ and $\mult_Q(V)=1$ for $V\in \Star^\D(Q), V\ne Q$.
\end{example}

Each tropical variety $\Trop(X)$ is naturally decomposed into
vertices, edges, faces, etc, because $\Trop(X)$ is a subcomplex of the Gr\"obner complex (see Section~\ref{sec_general}). So, we present $\Trop(X)=\bigcup X^i$ as a union of cells which we denote by $X^i$. Recall that if $X$ is a hypersurface, then each cell of $\Trop(X)$ is an equivalence class of some $\omega\in\Trop(X)$, with the equivalence relation $\omega\sim\omega'$ iff $\D_\omega=\D_{\omega'}$, see Remark~\ref{rem_duality}. Let $X\subset Y$, then $\Trop(X)\subset\Trop(Y)$.
\begin{definition}
\label{def_unionstar}
Define $\I^\D(\Trop(X))=\bigcup\I^\D(X^i)$. We define the {\it star} of the variety $\Trop(X)$ as $$\Star^\D(\Trop(X)) =\bigcup \Star^\D(X^i),\ \ \Star^\D_Y(\Trop(X)) =\bigcup \Star^\D_Y(X^i).$$
So, we take all the cells of $X$, draw the star for each of them, and take the union of these stars. 
\end{definition}

\begin{definition}
\label{def_multall}
For a vertex $V\in \I^\D(\Trop(X))$ we define its multiplicity $\mult_{\Trop(X)}(V)$ as $$\mult_{\Trop(X)}(V)=\max\limits_{X^i}\mult_{X^i}(V),$$ i.e. we take the maximum of the multiplicities of $V$ with respect to the cells in the natural cell decomposition of $\Trop(X)$.
\end{definition}

\begin{definition} Let $X\subset Y\subset \KK^k$ be algebraic varieties, $Y$ be a hypersurface with the Newton polytope $\D$.
The distinguished domain $\Inf(X)$ in $\D$, corresponding to
$X$, is $$\Inf(X)=\bigcup_{V\in\I^\D(\Trop(X))} d(V),$$ where $d(V)$ is the
cell (of the maximal dimension) of $\D$, dual to the vertex $V$ of $\Trop(Y)$. For $\I^\D(\Trop(X))$ see Definitions~\ref{def_verticesstar},~\ref{def_unionstar}.
\end{definition}
Note that $\Inf(X)$ depends only on $\Trop(X)$, so we will write $\Inf(\Trop(X))$.
\begin{definition}
\label{def_influencegeneral}
By $\vol(\Inf(\Trop(X)))$ we denote the sum of volumes (with multiplicities, see Definition~\ref{def_multall}) of the cells in the subdivision of $\Delta$, dual to the vertices in $\I^\D(\Trop(X))$, i.e. $$\vol(\Inf(\Trop(X))) =
\sum\limits_{V\in\I^\D(\Trop(X)) } \mult_{\Trop(X)}(V) \cdot\vol(d(V)).$$
\end{definition}

\begin{example}
\label{ex_multall}
Refer to Example~\ref{ex_multi}. Consider the two-dimensional case, $X=(x_1,x_2)\in(\KK^*)^2$ is a point such that $\Trop(X)=P=(\val(x_1),\val(x_2))\in\TT^2$. If $P$ is a vertex of $\Trop(Y)$, then $$\area(\Inf(P))=2\cdot\area(d(P))+\sum\limits_{\substack{V\in\I^\D(P),\\ V\ne P}} 1\cdot \area(d(V)),$$ which coincides with the definition of $\area(\Inf(P))$ in Definition~\ref{def_influence_vertex}. In Figure~\ref{governorship} (B) $\area(\Inf(P))$ is the area of the depicted part of the Newton polygon, with the area of the central face counted twice.
\end{example}

\begin{remark}
The dual object for a hypersurface is its Newton polytope. The dual
objects for the varieties of higher codimension are so-called {\it generalized Newton polytopes} or {\it
  valuations} in the McMullen
polytope algebra \cite{brion, erhart}. Even though for $X\subset Y\subset \KK^k$ with $\mathrm{codim}(Y)>1$ we can define $\I(\Trop(X))$ in a similar fashion (by intersecting the stars of cells of $\Trop(X)$ with $Y$),  it is not clear what is the right substitute for $\vol(\Inf(\Trop(X)))$ in this case.
\end{remark}

\subsection{General position of points with respect to the Newton polytope}

\begin{definition}
\label{def_generalposition}
A collection of tropical subvarieties $Z_1,Z_2,\dots,Z_n\subset \TT^k$ is {\it in general
position} with respect to a polytope $\D\subset \ZZ^k$ if for each $m=1,2,\dots k+1$ for each collection of indices $i_1<i_2<\dots<i_m$ the intersection $\Star^\D(Z_{i_1})\cap \Star^\D(Z_{i_2})\cap\dots\cap \Star^\D(Z_{i_{m}})$ (Definition~\ref{def_unionstar}) has codimension at least $m$ in $\TT^k$. In particular, the intersection of any $k+1$ stars is empty.
\end{definition}

\begin{prop}
\label{prop_points}
Let $v\in P(\ZZ^2)$ be a primitive vector such that  $v\notin P(\D)$ (Definition~\ref{def_star2}). Let $l$ be the line $\{t\cdot v|t\in\RR\}$. Then for any $n\in\ZZ_{>0}$ there exists a collection of points $P_1,P_2,\dots,P_n\in l\cap\ZZ^2$ in general position with respect to $\D$.
\end{prop}

\begin{proof}
Let $P_1=0$. Take any point $P_2\in l\cap\ZZ^2$. Note that the vector $P_1P_2\notin P(\D)$, therefore $P_1,P_2$ are in general position. Draw $\Star^\D(P_1),\Star^\D(P_2)$ and all stars $\Star^\D$ for the points of intersection of lines in $\Star^\D(P_1),\Star^\D(P_2)$. It is a finite collection of lines, none of them is $l$. Therefore we can find a point $P_3\in l\cap \ZZ^2$ which does not belong to these lines. Then we draw $\Star^\D(P_3)$ and starts $\Star^\D$ of all points of intersection between lines in $\Star^\D(P_3)$ and lines in $\Star^\D(P_1),\Star^\D(P_2)$. Again, we see a finite collection of lines and we can choose $P_4\in l\cap\ZZ^2$ not on these lines. We can continue to choose points $P_5,\dots, P_n$ as above. When we choose $P_n$ we have ${{n-1}\choose 2} |P(\D)|^2$ points of intersections of lines in $\Star^\D(P_1),\Star^\D(P_2),\dots,\Star^\D(P_{n-1})$, when we draw stars $\Star^\D$ for them, we have ${{n-1}\choose 2}|P(\D)|^3$ lines, therefore we can choose $P_n\in l\cap [0, R]^2$ where $R=|v|\cdot 2{{n-1}\choose 2} |P(\D)|^3$.
\end{proof}

Let $T_{v}$ be the translation $\TT^k\to \TT^k$ by the vector $v\in \RR^k$, i.e. $x\to x+v$.

\begin{prop}
\label{N}
 For a polytope $\D\subset \ZZ^k$ and a given set $Z_1,Z_2,\dots,Z_n\in \TT^k$ of tropical varieties there exists a set of vectors $v_1,v_2,\dots,v_n\in\ZZ^k$ such that the tropical varieties $T_{v_i}(Z_i)$ are in general position with $\D$.
 \end{prop}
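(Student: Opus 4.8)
The plan is to show that the set of translation vectors $(v_1,\dots,v_n)$ for which general position \emph{fails} forms a finite union of proper affine subspaces of $\QQ^{kn}$, and then to exhibit an integer point off that union.

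First I would record the basic finiteness. Since $P(\D)$ is finite and each $Z_i$ is a finite polyhedral complex, $TC^\D(Z_i)=\bigcup_{\sigma}\bigcup_{u\in P(\D)}l_\sigma(u)$, where $\sigma$ ranges over the cells of $Z_i$; each nonempty $l_\sigma(u)$ lies in a single affine hyperplane $\{x\mid\langle u,x\rangle=c_{\sigma,u}\}$ with $u\in\ZZ^k\setminus\{0\}$. So $TC^\D(Z_i)$ is covered by finitely many such hyperplanes, and since translating $Z_i$ by $v$ translates each $l_\sigma(u)$ by $v$, one has $TC^\D(T_v Z_i)=T_v\bigl(TC^\D(Z_i)\bigr)$. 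Therefore the $T_{v_i}(Z_i)$ fail to be in general position with $\D$ exactly when there are indices $i_1<\dots<i_{k+1}$ and hyperplanes $H_l=\{\langle u_l,\cdot\rangle=c_l\}\supseteq l_{\sigma_l}(u_l)$, one from each $TC^\D(Z_{i_l})$, with $\bigcap_{l=1}^{k+1}T_{v_{i_l}}(H_l)\neq\varnothing$; there are only finitely many such ``bad configurations''.

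Next I would treat a single bad configuration. The $k+1$ normals $u_1,\dots,u_{k+1}\in\ZZ^k$ satisfy a nontrivial integral relation $\sum_{l}\lambda_l u_l=0$. If $x$ lies in every $T_{v_{i_l}}(H_l)$, then $\langle u_l,x\rangle=c_l+\langle u_l,v_{i_l}\rangle$ for all $l$, and pairing this system with $\lambda$ kills the $x$-term, leaving $\sum_l\lambda_l\langle u_l,v_{i_l}\rangle=-\sum_l\lambda_l c_l$. The left side is the value at $(v_1,\dots,v_n)\in\QQ^{kn}$ of the linear form $L(v)=\sum_l\lambda_l\langle u_l,v_{i_l}\rangle$, which is not identically zero because the indices $i_1,\dots,i_{k+1}$ are pairwise distinct and at least one $\lambda_l u_l\neq 0$. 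Hence each bad configuration $\sigma$ confines $(v_1,\dots,v_n)$ to a proper affine hyperplane $\{L_\sigma=\gamma_\sigma\}$ of $\QQ^{kn}$, where $L_\sigma$ is a nonzero integral linear form.

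Finally I would pick the translations: as $\prod_\sigma L_\sigma$ is a nonzero polynomial on the infinite set $\ZZ^{kn}$, there is $w\in\ZZ^{kn}$ with $L_\sigma(w)\neq 0$ for every $\sigma$; taking $v=tw$ with $t\in\ZZ$, the $\sigma$-th bad configuration can occur only if $t\,L_\sigma(w)=\gamma_\sigma$, i.e.\ for at most one value of $t$, so any $t$ outside this finite set yields vectors $v_i=t\,w_i\in\ZZ^k$ putting $T_{v_1}(Z_1),\dots,T_{v_n}(Z_n)$ in general position with $\D$. I do not expect a serious obstacle here; the only care needed is bookkeeping — that $TC^\D(Z_i)$ really is a finite union of affine hyperplane pieces with directions in $P(\D)$, that $TC^\D$ commutes with integer translations, and, if one argues literally inside $\TT^k$ rather than $\RR^k$, that coordinates equal to $-\infty$ produce no extra components — each of which is immediate from the definitions.
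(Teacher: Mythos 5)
Your proposal is correct, and it reaches the conclusion by a genuinely different mechanism than the paper. The paper chooses the translations $v_1,\dots,v_n$ one at a time: after fixing $v_1=0$ it inductively picks each $v_j$ so that any intersection of $i$ hyperplanes taken from $i$ distinct tangent cones has dimension at most $k-i$; after $n$ steps a $(k+1)$-fold intersection is forced to be empty. You instead work in the full parameter space $\QQ^{kn}$ of all translation tuples at once, and you identify an explicit linear obstruction: for each bad configuration, the linear dependence $\sum_l\lambda_l u_l=0$ among the $k+1$ normals in $\ZZ^k$ turns non-emptiness of $\bigcap_l T_{v_{i_l}}(H_l)$ into a single non-trivial affine-linear equation $L_\sigma(v)=\gamma_\sigma$ (non-trivial precisely because the $u_l$ are nonzero and the indices $i_l$ are distinct, so the blocks of $L_\sigma$ cannot all vanish). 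The paper's induction is shorter and keeps the stronger running invariant on codimensions of partial intersections; your one-shot argument has the advantage of exhibiting the bad locus as an explicit finite union of proper affine hyperplanes with integer coefficients bounded in terms of $\D$ and the combinatorics of the $Z_i$, which makes the effective bound $|v_i|\le N$ of Corollary~\ref{cor_N} essentially immediate rather than a separate observation. The only points requiring care in your write-up are the ones you already flag (that $TC^\D$ commutes with translation and that $-\infty$ coordinates contribute nothing), and both are indeed immediate from the definitions.
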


\begin{proof}
Indeed, each star $\Star^\D(Z_i)$ is a finite union of hyperplanes (Definition~\ref{def_star2}). We argue as in Proposition~\ref{prop_points}. We can choose $v_1=0$ and $v_2\in\ZZ^k$ such that the intersection of each two hyperplanes $L_1,L_2$ from the collections $\Star^\D(Z_1)$ and $\Star^\D(T_{v_2}(Z_2))$ respectively is a linear subspace of dimension at most $k-2$. Then we choose a vector $v_3\in\ZZ^k$ such that the intersection of each pair of hyperplanes from different collections $\Star^\D(T_{v_i}(Z_i)), i=1,2,3$ is of dimension at most $k-2$ and the intersection of a triple of hyperplanes from different collections is of dimension at most $k-3$, etc. Each time we see that the set of $v_i$ such that $\Star^\D(T_{v_i}Z_i)$ violates some transversality condition is a finite union of hyperplanes, and we need to take a $v_i\in\ZZ^k$ outside of it.
\end{proof}


\begin{corollary}
For each $n,k\in \mathbb N, \Delta$ there exists a set of
points (taken as tropical varieties of dimension zero) $P_1,P_2\dots,$ $P_n\in \mathbb Z^k\subset\TT^k$ in general
position with respect to $\Delta$. 
\end{corollary}

\begin{corollary}
\label{cor_volume}
For a collection of tropical varieties $Z_1,Z_2,$ $\dots,Z_n\subset Y\subset \TT^k$ in general position with respect to the Newton polytope $\Delta$ of a tropical hypersurface $Y$, the
sum $\sum_{i=1}^n \vol(\Inf(Z_i))$ is at most $k\cdot\mathrm{Volume}(\Delta)$.
\end{corollary}

\begin{proof} This follows from the definitions of a general position (Definition~\ref{def_generalposition}) and multiplicities in the volume of $\Inf$ (Definition~\ref{def_influencegeneral}). \end{proof}

\subsection{First applications}
Consider an algebraci curve $C\subset (\KK^*)^2$ passing through
$p_1,p_2,\dots,p_n\in (\KK^*)^2$ with multiplicities
$m_1,m_2,\dots,m_n$ respectively. Suppose that $n\geq 2$ and the minimal lattice width $\omega(\D)$ of the Newton polygon $\D$ of $C$ satisfies $\omega(D)\geq\max(m_i)$.

\begin{lemma}\label{lemma_old}
If the points $\Val(p_i)\in\ZZ^2, i=1,\dots,n$ are in general
position with respect to $\Delta$ (see Proposition \ref{N} and its
corollaries), then the area of $\Delta$ satisfies the inequality
\begin{equation}
\mathrm{area}(\Delta)\geq\frac{1}{4}\sum_{i=1}^n m_i^2.
\end{equation}
\end{lemma}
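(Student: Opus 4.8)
The plan is to combine Theorem~\ref{infl_theorem} with Corollary~\ref{cor_volume}. By Proposition~\ref{N} and its corollaries, the hypothesis that $\val(p_1),\dots,\val(p_n)$ are in general position with respect to $\Delta$ means precisely that no three of the tangent cones $TC^\D(\val(p_i))$ meet: in the planar case $k=2$, general position rules out triple intersections. Hence the subsets $\I(\val(p_i))$ of vertices of $\Trop(C)$ have controlled overlap — each vertex of $\Trop(C)$ lies in $\I(\val(p_i))$ for at most two indices $i$ — and, counting multiplicities as in the definition of $\vol(\Inf(\cdot))$, Corollary~\ref{cor_volume} gives
\[
\sum_{i=1}^n \area(\Inf(\val(p_i))) \;\le\; 2\cdot\area(\Delta).
\]

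First I would check that the setup indeed applies: each $p_i\in\KK^2$ is a point, so $\Trop(\val(p_i))$ is the single point $\val(p_i)\in\TT^2$, a zero-dimensional tropical variety, and its tangent cone $TC^\D(\val(p_i))$ is the finite union of lines through $\val(p_i)$ with the lattice directions from $\dird$. Since $\omega(\D)\ge\max(m_i)\ge m_i$ for every $i$, Theorem~\ref{infl_theorem} applies to each point separately and yields $\area(\Inf(\val(p_i)))\ge \tfrac{m_i^2}{2}$. Here one should be mindful that $\Inf(\val(p_i))$ in the sense of \cite{kalinin} (used in Theorem~\ref{infl_theorem}) agrees with the notion $\vol(\Inf(\Trop(X)))$ used in Corollary~\ref{cor_volume}; the $\mathrm{Example}$ following the definition of $\vol(\Inf)$ records exactly this compatibility, with the vertex $P=\val(p_i)$ itself counted with multiplicity $2$.

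Combining the two inequalities gives
\[
2\cdot\area(\Delta)\;\ge\;\sum_{i=1}^n \area(\Inf(\val(p_i)))\;\ge\;\sum_{i=1}^n\frac{m_i^2}{2},
\]
hence $\area(\Delta)\ge\frac14\sum_{i=1}^n m_i^2$, as claimed. The main obstacle is not any computation but a bookkeeping point: one must make sure that the two-fold overlap bound coming from general position is applied \emph{after} assigning multiplicity $2$ to each $\val(p_i)$ as a vertex of its own influence set — i.e. that the ``at most $k$ at once'' overlap in Corollary~\ref{cor_volume} is genuinely compatible with the multiplicity-weighted count in Theorem~\ref{infl_theorem}, so that no volume is double-subtracted or double-counted. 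Once the definitions are lined up this is immediate, and no genericity of the $p_i$ in $\KK^2$ is needed beyond general position of their valuations.
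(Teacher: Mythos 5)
Your proof is correct and follows exactly the paper's own argument: apply Theorem~\ref{infl_theorem} to each point to get $\area(\Inf(\val(p_i)))\ge m_i^2/2$, then use Corollary~\ref{cor_volume} with $k=2$ to bound the sum by $2\cdot\area(\Delta)$. The extra remarks on reconciling the multiplicity conventions are a reasonable elaboration of what the paper leaves implicit, but the route is the same.
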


\begin{proof}  Theorem \ref{infl_theorem} and Corollary \ref{cor_volume} imply that
$$\sum_{i=1}^n\frac{m_i^2}{2}\leq\sum\limits_{i=1}^n \area(\Inf(P_i))\leq 2\cdot\area(\D).$$
\end{proof}

\begin{corollary}
\label{cor_first}
Suppose that a curve of degree $d$ passes through the points $p_1,p_2,\dots,p_n\in (\KK^*)^2$ with multiplicities
$m_1,m_2,\dots,m_n$ respectively, $d\geq\max(m_i)$, and $n\geq 2$. Suppose also that the points $\Val(p_i)$ are in general position with respect to $\D$.  Then, we
have $d^2\geq\frac{1}{2}\sum_{i=1}^n m_i^2$.
\end{corollary}

\begin{proof}
The equation of a curve of degree $d$ may contain some monomials with zero coefficients. So, if the minimal lattice width of the actual Newton polygon of $C$ is at least $\max(m_i)$, then we conclude by Lemma~\ref{lemma_old}. If it is not the case, we apply Lemma~\ref{lemma_width}.

If $C$ has a rational component passing through a point $p=(x_1,y_1)$ parametrized by $s$ as $(x_1s^{k},y_1s^{l})$, then $C$ is reducible, and we can
perturb this component, because it does not pass through other $p_i$ by genericity (recall that the tropicalization of this component is a straight line in the direction $(k,l)\in P(\D)$). After that this component is no longer of the type
$(x_1s^k,y_1s^l)$, and this perturbation does not change the degree of the curve. After repeating this cycle of arguments necessary number of times we can apply Lemma~\ref{lemma_old}.
\end{proof}

\begin{remark}
\label{rem_surface} Consider a hypersurface $H$ in $(\KK^*)^3$ passing through generic points of multiplicity two. The classification of possible combinatorial neighborhoods of a two-fold point $P$ in a tropical surface in $\TT^3$ (\cite{markwig2}) allows us to produce an estimates for the volume and the shape of $\Inf(P)$. 

We can prove that $\vol(\Inf(P))\geq 2$ if $P=(\val(p_1),\val(p_2))$ for a point $(p_1,p_2)$ of multiplicity two in $H$. With a few more work (one should check possible intersections $\Inf(P)\cap \Inf(Q)$ for different points $P,Q$ in general position) the author obtained a proof of an estimate $n\leq \frac{d^3}{3}$ for the degree $d$ of a surface with $n$ two-fold points, as we did in Lemma~\ref{lemma_old}.

However, the theorem of Alexander and Hirschowitz provides a better estimate $n\leq \frac{(d+1)(d+2)(d+3)}{24}$. Nevertheless, we expect that this research paradigm can be carried out for points of multiplicity $m$ on hypersurfaces in $(\KK^*)^3$, with a conjectural estimate $\vol(\Inf(P))\geq cm^3$ for some constant $c$. 
\end{remark}


\begin{remark}
We expect that for a line $L$ of multiplicity $m$ inside a surface $Y$ of degree $d$ in $\KK P^3$ the estimate $\vol(\Inf(\Trop(L)))\geq cm^2d$ holds with some constant $c$. This will give an estimate for the degree of  a surface with multiple two-fold points and $m$-fold lines. The idea is how it could work is as follows. Consider $\Trop(L)\subset \Trop(Y)$. Intersect it with a plane $\{Z=\mathrm{const}\}\subset\TT^3$. In the intersection we will see the same picture as for planar curves with an $m$-fold point. Varying the constant and the normal vector of such a plane we can make a conjecture as follows.
\end{remark}

\begin{conj}
Under the above hypothesis, $\Inf(\Trop(L))$ is a a connected subset of the Newton polytope $\D$ of $Y$, which intersects the faces of $\D$, perpendicular to the directions of the rays of $\Trop(L)$, and whose sections by planes with a primitive normal vector in $\ZZ^3$ have area at least $\frac{3}{8}m^2$. Hence, $\vol(\Inf(\Trop(L)))\geq \frac{3}{8}m^2d$.
\end{conj}

\subsection{Detropicalization Lemma}

An algebraic statement over an algebraically closed field sometimes implies the
same statement over all fields of the same
characteristic. Tropical geometry may help in such a situation, see \cite{tyomkin}. Another application of tropical geometry in number theory is \cite{katz2015uniform}. This
section describes a particular application of this principle to our estimate.

Recall that our field $\KK$ is the field of fractions $\frac{f(t)}{g(t)}$ where $f,g\in\FF[t]$. Note that we can substitute $t=a$ if $g(a)\ne 0$.

Let us recall how to {\it tropicalize} the problem of curves' counting. We
would like to count plane complex algebraic curves of given genus and
degree, these curves are required to pass through a number of generic
points $q_1,q_2,\dots,q_l\in \CC P^2$ ($l$ is chosen in such a way
that the number of curves is expected to be finite). Since the points are
generic, we can force them to go to infinity with some asymptotics, say
$q_i=(t^{x_i},t^{y_i}), (x_i,y_i)\in\ZZ^2$. Then we consider the limits of these
curves $C_t$ under the function $\log_t(|z|):\CC^2\to \RR^2$. This is
more or less the same as if we consider a curve $C$ over $\KK$
passing through $(t^{x_i},t^{y_i})\in (\KK^*)^2$ and then
take its tropicalization $\Trop(C)$. Hence we started from $\CC$, lifted to
$\KK$, and finally descended to $\TT$.

Detropicalization is the opposite process: we prove something
in $\TT$, then lift the construction to
$\KK$, and return to $\FF$ using such a substitution for an appropriate $a$. We establish the following lemma.

\begin{lemma}
\label{lemma_trop}
Let $m_1,m_2,\dots, m_n$ be non-negative integers. Let $\D$ be a lattice polygon such that $$\area(\D)<\sum\limits_{i=1}^n \frac{m_i^2}{4}.$$ Then, if a set of points $P_1,\dots,P_n\in \ZZ^2\subset\TT^2$ is in general position with respect to $\D$ (Definition~\ref{def_generalposition}), then for each valuation field $\KK$ and points $p_1,p_2,\dots,p_n\in(\KK^*)^2$ such that $\Val(p_i)=P_i$ there exists no curve $C$ over $\KK$ with the Newton polygon $\D$, with $\mu_{p_i}(C)\geq m_i, i=1,\dots, n$.
\end{lemma}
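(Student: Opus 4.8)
The plan is to argue by contradiction, turning the hypothetical curve over $\KK$ into a tropical configuration that violates Lemma~\ref{lemma_old}. Suppose such a curve $C$ over $\KK$ exists with Newton polygon $\D$ and $\mu_{p_i}(C)\geq m_i$. The crucial point is that passing to the tropicalization does not lose the multiplicity information encoded in Theorem~\ref{infl_theorem}: the points $P_i = \val(p_i) = (x_i,y_i)$ are vertices of $\Trop(C)$ (or can be made so after a generic translation, using Proposition~\ref{N} and its corollaries — but here we have already assumed the $(x_i,y_i)$ are in general position with respect to $\D$). So the first step is to confirm that each $P_i$ lies on $\Trop(C)$ and, combined with $\mu_{p_i}(C)\geq m_i$ and the hypothesis $\omega(\D)\geq \max(m_i)$ — wait, this last hypothesis is \emph{not} assumed in Lemma~\ref{lemma_trop}, so I must be careful.

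Looking again: the lemma as stated does \emph{not} assume $\omega(\D)\geq \max(m_i)$, yet Theorem~\ref{infl_theorem} needs that hypothesis. So the real first step is to reduce to the case $\omega(\D)\geq\max(m_i)$. If $\omega(\D)<\max(m_i)$, say $\omega_u(\D) = m_i - a$ for some direction $u$ and some $i$ with $a>0$, then I want to invoke the mechanism from Lemma~2.10/5.19 of \cite{kalinin} (reproduced here as Lemma~\ref{lemma_prep} and the preceding rational-component lemma): in the narrow direction the curve $C$ is forced to contain a rational component of the monomial type $(s^{u_1},s^{u_2})$ through the point $p_i$. But such a component contributes nothing essential and can be perturbed away without changing $\D$ — or, more precisely, after stripping it off we get a curve with a \emph{smaller} Newton polygon still satisfying the singularity conditions with reduced multiplicities, and one can set up a descending induction. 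I expect this reduction to $\omega(\D)\geq\max(m_i)$ to be the main obstacle, because one must track how $\area(\D)$ and the $m_i^2$ change simultaneously and verify the strict inequality $\area(\D)<\sum m_i^2/4$ is preserved (or leads to a contradiction directly); the bookkeeping here is the delicate part.

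Once we are in the regime $\omega(\D)\geq\max(m_i)$, the argument is short. By Theorem~\ref{infl_theorem}, for each $i$ we have $\area(\Inf(P_i))\geq m_i^2/2$. Since the $P_i$ are in general position with respect to $\D$ (the standing hypothesis of the lemma), Corollary~\ref{cor_volume} applied with $k=2$ gives $\sum_{i=1}^n \area(\Inf(P_i))\leq 2\area(\D)$. Chaining these,
\begin{equation*}
\sum_{i=1}^n \frac{m_i^2}{2}\leq \sum_{i=1}^n \area(\Inf(P_i))\leq 2\,\area(\D),
\end{equation*}
so $\area(\D)\geq \frac14\sum_{i=1}^n m_i^2$, contradicting the hypothesis $\area(\D)<\sum_{i=1}^n m_i^2/4$. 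Hence no such curve $C$ exists.

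A couple of technical points I would make explicit. First, one should note that the statement is vacuous / trivially true unless such a general-position configuration of $(x_i,y_i)$ can be realized, which is guaranteed by the corollaries of Proposition~\ref{N}; but since the $(x_i,y_i)$ are given to be in general position, this is just a consistency remark. Second, I would remark that the field $\KK$ here can be completely arbitrary (with any valuation to $\TT$), since Theorem~\ref{infl_theorem} and Corollary~\ref{cor_volume} make no characteristic or algebraic-closedness assumptions — the whole point of phrasing the lemma over an arbitrary $\KK$ is that the subsequent detropicalization via $\nu_a$ will then transfer it to $\FF$. The only genuinely substantive ingredient beyond quoting \cite{kalinin} is the Newton-polygon reduction in the first paragraph handling the case where $\D$ is too narrow in some direction.
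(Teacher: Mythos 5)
Your second and third paragraphs reproduce the paper's proof exactly: it is the three-line chain $\sum_i m_i^2/2\leq\sum_i\area(\Inf(P_i))\leq 2\area(\D)$, with the first inequality from Theorem~\ref{infl_theorem} and the second from Corollary~\ref{cor_volume} with $k=2$, contradicting $\area(\D)<\sum_i m_i^2/4$. The concern you raise in your first paragraph is legitimate but is resolved differently than you propose: the paper does \emph{not} perform any reduction to the case $\omega(\D)\geq\max(m_i)$; it tacitly relies on the standing assumption at the head of the section (``Let $C$ be a curve over $\KK$ with the Newton polygon $\D$ such that $\omega(\D)\geq m$''), so that hypothesis should simply be read into the statement of the lemma. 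Your sketched reduction via stripping off rational monomial components cannot be completed, because without the width hypothesis the statement is actually false: take $n=1$, $C\colon (x-1)(y-1)^{m-1}=0$, $p_1=(1,1)$, $m_1=m\geq 3$. Then $\D=[0,1]\times[0,m-1]$ has $\area(\D)=m-1<m^2/4$, the single point $\val(p_1)=(0,0)$ is vacuously in general position (the definition only constrains $(k+1)$-tuples of the $Z_i$), and yet $\mu_{p_1}(C)=m$ — this is exactly the paper's own example following Theorem~\ref{infl_theorem}. So the right fix is to add $\omega(\D)\geq\max(m_i)$ to the hypotheses, not to try to prove the lemma without it.
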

\begin{proof}
Suppose that such a curve $C$ exists. Then, consider $\Trop(C)$. We know that in this case $$\area\big(\Inf(P_i)\big)\geq \frac{m_i^2}{2}$$ for $i=1,\dots,n$ and, therefore, $\sum_{i=1}^n\area(\Inf(P_i))\geq \sum\limits_{i=1}^n \frac{m_i^2}{4} \geq 2\cdot\area(\D)$. So, using Corollary~\ref{cor_volume} we arrive at a contradiction.
\end{proof}

\begin{lemma}[Detropicalization lemma]
\label{lemma_detrop}
Let $\KK$ be the field of fractions of $\FF[t]$. Suppose that there exists no curve $C$ over $\KK$ with the Newton polygon $\D$ such that $$\mu_{(t^{-x_i},t^{-y_i})}(C)\geq m_i,$$
for given different points $(x_i,y_i)\in\ZZ^2, i=1,\dots,n$ and given numbers $m_i\in\ZZ_{>0},i=1,\dots,n$.  Then, there exists a constant $N$ depending on $m_1,m_2,\dots,m_n,\D, \max |x_i|,\max |y_i|$ with the following property.  If $|\FF|\geq N$, then there exists $a\in \FF$ such that
there is no curve $C$ over $\FF$ with the Newton polygon $\D$, satisfying $\mu_{ {(a^{-x_i},a^{-y_i})}}(C)\geq m_i$ for each $i=1,\dots,n.$
\end{lemma}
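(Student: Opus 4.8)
The plan is to rephrase everything as linear algebra over the ring $\ZZ[b^{\pm1}]$ and then to apply a degree bound for determinantal minors. Fix $V=\D\cap\ZZ^2$ and identify a curve with Newton polygon contained in $\D$ with its coefficient vector $c=(c_I)_{I\in V}$. By Example~\ref{example_sing}, applied after the (purely polynomial) translation sending a point $p$ to the origin, the condition $\mu_{(b^{-x_i},b^{-y_i})}(C)\ge m_i$ is a system of $\tfrac{m_i(m_i+1)}{2}$ linear equations in $c$ whose coefficients are Laurent polynomials in $b$ with \emph{integer} coefficients. Assembling all $i$ yields a matrix $M(b)$ over $\ZZ[b^{\pm1}]$ of size $\bigl(\sum_i\tfrac{m_i(m_i+1)}{2}\bigr)\times|V|$, and for every $|V|\times|V|$ minor of $M$ the spread between the highest and lowest exponent of $b$ occurring is bounded by an explicit quantity $D_0$ depending only on $m_1,\dots,m_n$, $\D$, $\max x_i$ and $\max y_i$ (here we use, as holds in the applications, that the $x_i,y_i$ are integers). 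Over any field $L$ and any $\beta\in L^{*}$, the existence of a nonzero curve over $L$ with support in $V$ satisfying all the multiplicity conditions at $(\beta^{-x_i},\beta^{-y_i})$ is equivalent to $\operatorname{rank}M(\beta)<|V|$, i.e.\ to the simultaneous vanishing at $\beta$ of all $|V|\times|V|$ minors of $M$.

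Next, take $N$ large enough in terms of $D_0$ (and the finite exceptional sets that appear below), assume $|\FF|\ge N$, and argue by contradiction: suppose that for \emph{every} $\beta\in\FF^{*}$ there is a curve $C_\beta$ over $\FF$ with Newton polygon exactly $\D$ and $\mu_{(\beta^{-x_i},\beta^{-y_i})}(C_\beta)\ge m_i$ for all $i$. Since $C_\beta$ has a nonzero coefficient vector, every $|V|\times|V|$ minor of $M(b)$ vanishes at every point of $\FF^{*}$; regarded as a Laurent polynomial over the prime field of $\FF$ it has at most $D_0$ relevant exponents yet $|\FF^{*}|=|\FF|-1>D_0$ roots, so it is the zero polynomial in $\FF[b^{\pm1}]$. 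Its coefficients are integers and $\operatorname{char}\KK=\operatorname{char}\FF$, hence it is also zero in $\KK[b^{\pm1}]$; in particular $\operatorname{rank}_{\FF(a)}M(a)<|V|$ for an indeterminate $a$, so the kernel $K\subseteq\FF(a)^{V}$ of $M(a)$ is a nonzero subspace over the \emph{infinite} field $\FF(a)$.

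It remains to locate inside $K$ the coefficient vector of a curve with Newton polygon exactly $\D$. For each vertex $I_0$ of $\D$, the coordinate functional $c\mapsto c_{I_0}$ is not identically zero on $K$: choose a basis of $K$ with entries in $\FF[a]$ whose specialization at $a=\beta$ is a basis of $\ker M(\beta)\subseteq\FF^{V}$ for all $\beta$ outside a finite set (standard); if the functional vanished on $K$ it would vanish on $\ker M(\beta)$ for cofinitely many $\beta\in\FF^{*}$, contradicting that $C_\beta$ has nonzero $I_0$-coefficient. Hence $K$ is not covered by the finitely many hyperplanes $\{c_{I_0}=0\}$ indexed by the vertices of $\D$, and since a vector space over the infinite field $\FF(a)$ is not a finite union of proper subspaces, there is $c^{*}\in K$ with $c^{*}_{I_0}\ne0$ for every vertex $I_0$. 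Then $\conv(\operatorname{supp}(c^{*}))=\D$, so $c^{*}$ defines a curve $C^{*}$ over $\FF(a)$ with Newton polygon $\D$ and $\mu_{(a^{-x_i},a^{-y_i})}(C^{*})\ge m_i$ for all $i$. Composing with the embedding $\FF(a)\hookrightarrow\FF\{\{t\}\}=\KK$, $a\mapsto t$, produces a curve over $\KK$ with Newton polygon $\D$ and $\mu_{(t^{-x_i},t^{-y_i})}\ge m_i$ for all $i$, contradicting the hypothesis. Therefore some $\beta\in\FF^{*}$ admits no such curve over $\FF$, as required.

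I expect the main obstacle to be the third step, and it is precisely where the hypothesis $|\FF|\ge N$ and the two-stage descent $\FF\rightsquigarrow\FF(a)\rightsquigarrow\KK$ are forced: when $\FF$ is finite one cannot say that a \emph{generic} vector of a linear system has full support, and ``Newton polygon exactly $\D$'' is a locally closed, not closed, condition, so it is not directly a rank condition. Passing first to the infinite field $\FF(a)$ legitimizes the genericity step, the specialization of kernels certifies that no vertex coefficient is forced to vanish, and only afterwards do we embed into $\KK$. Two further points require care: that the vanishing of the minors over $\FF$ transfers to $\KK$ — which uses that these minors are polynomials with $\ZZ$ coefficients and that $\operatorname{char}\KK=\operatorname{char}\FF$ — and that the exponents $x_i,y_i$ be integers, which is the case in the applications since the valuations $\val(p_i)$ are chosen in $\ZZ^2$.
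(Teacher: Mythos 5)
Your argument is correct, and its engine is the same as the paper's: the multiplicity conditions assemble into a matrix whose entries are Laurent polynomials in the deformation parameter with integer coefficients and explicitly bounded exponent spread, so a maximal minor vanishing at more points of $\FF^{*}$ than that bound must vanish identically, and identical vanishing transfers between $\FF$ and $\KK$ because the characteristics agree. The paper runs this forward (no solution over $\KK$ $\Rightarrow$ some minor is a nonzero polynomial in $t$ $\Rightarrow$ pick $a$ avoiding its roots), while you take the contrapositive; that is a cosmetic difference. Where you genuinely add something is the third step: $\operatorname{rank}M<|V|$ only certifies a nonzero kernel vector, i.e.\ a curve with support \emph{contained} in $\D$, whereas both hypothesis and conclusion of the lemma concern curves with Newton polygon \emph{exactly} $\D$ (and this matters, since the paper's area estimates need $\omega(\D)\geq\max m_i$, which can fail for a sub-polygon). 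The paper's one-paragraph proof is silent on this point; your detour through $\FF(a)$ --- specializing a polynomial kernel basis at the various $\beta$ to see that no vertex coordinate vanishes identically on the kernel, then using that a vector space over the infinite field $\FF(a)$ is not a finite union of proper subspaces, and only then embedding $\FF(a)\hookrightarrow\KK$ via $a\mapsto t$ --- closes that gap cleanly. The only thing to state explicitly is that the ``finite exceptional sets'' in the specialization step are cut out by nonzero polynomials whose degrees are bounded by the same data ($m_i$, $\D$, $\max|x_i|$, $\max|y_i|$), so they can be absorbed into the constant $N$; this is true (Cramer's rule bounds the degrees of the kernel basis), but your $N$ must be chosen after that bound is recorded.
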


\begin{proof}  Suppose the contrary. Take any $b\in \FF$.  All the constraints imposed by the fact $\mu_p(C)\geq m$ are linear
equations in the coefficients of the equation of $C$. Therefore the only reason why there is no solution for this system over
$\KK$ and there exists a solution over $\FF$ is that some
minor of the matrix of the equations turns out to be 0 after substitution $t=b$. Thus, let us compute all
the considered minors before, they reveal to be polynomials in $t$ with degrees
depending on our data. Therefore, $b$ is a root of this fixed polynomial of a certain bounded degree. Obviously, if $|\FF|$ is big enough, then there exists
$a$ which is not a root of this polynomial. Therefore, this $a$ satisfies the statement of lemma.
\end{proof}

\begin{remark}
In a similar way we can ``detropicalize'' in other situations, if
the conditions imposed on $C$ reveal to be algebraic conditions on the
coefficients of the equation of $C$.
\end{remark}

\section{Degeneration of tropical points to a line}
In this section, using tropical floor diagrams (see \cite{BIMS,floor1}), we construct a special collection of points in $\TT^2$,
which are in general position with respect to the Newton polygon $\D$; this
construction gives another estimate for $\area(\D)$.

Consider a tropical curve $H$ given by  $\Trop(F)(X,Y)=\max_{(i,j)}(A_{ij}+iX+jY)$ where $(i,j)$ runs over lattice points in a fixed Newton polygon $\D$. After a toric change of coordinates we may assume that the minimal lattice width $\omega(\D)$ of $\D$ is attained in the horizontal direction.
Let $\Delta$ be contained in the strip $\{(x,y)|0\leq y\leq N\}$.
Let us choose points $P_1,P_2,\dots,P_n$ on the line $l=\{(X,Y)|Y=\frac{1}{N+1} X\}$ which is almost
horizontal, namely, its slope $\frac{1}{N+1}$ is less than any possible slope
of a non-horizontal edge of a tropical curve with the given Newton polygon $\Delta$. 

\begin{prop}
Suppose that each of the points $P_1,P_2,\dots,P_n$ is not a vertex of $H$, and each $P_i$ belongs to a horizontal edge $E_i$ of $H$. In this case, for each $1\leq i<j\leq n$ we have $\Inf(P_i)\cap\Inf(P_j)=\varnothing$. 
\end{prop}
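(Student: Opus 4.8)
The plan is to determine $\I(P_i)$ completely: it should be exactly the set of vertices of $\Trop(H)$ lying on the maximal horizontal ``floor'' of $\Trop(H)$ through $P_i$. Write $L_i$ for the maximal connected union of horizontal edges of $\Trop(H)$ containing $E_i$; since two horizontal edges can meet only at a vertex and each vertex has a single height, $L_i$ lies on one horizontal line $\ell_i=\{y=h_i\}$, and $P_i\in E_i\subset L_i$. As $E_i$ is a horizontal edge of $\Trop(H)$, its dual edge in the subdivision of $\D$ is vertical, i.e.\ joins two lattice points of $\D$ with equal $x$-coordinate; hence $(0,1)\in P(\D)$ and the horizontal line $\ell_i=l_{P_i}((0,1))$ is one of the lines making up $TC^\D(P_i)$. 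Thus $L_i\subset\Trop(H)\cap TC^\D(P_i)$, and since $L_i$ is connected and contains $P_i$, every vertex of $\Trop(H)$ on $L_i$ belongs to $\I(P_i)$.

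For the reverse inclusion I would show that the connected component of $P_i$ in $\Trop(H)\cap TC^\D(P_i)$ never leaves $L_i$. The set $TC^\D(P_i)$ is a finite union of lines through $P_i$, one being $\ell_i$ and the others non-horizontal. Because $P_i$ is not a vertex of $\Trop(H)$, near $P_i$ the curve $\Trop(H)$ is just the horizontal segment $E_i\subset\ell_i$, so every non-horizontal line of $TC^\D(P_i)$ meets $\Trop(H)$ near $P_i$ only at $P_i$ itself; hence the component starts inside $\ell_i$ and follows $L_i$. At any vertex $V$ of $L_i$, a horizontal edge of $\Trop(H)$ leaving $V$ is again part of $L_i$ by maximality, while a non-horizontal edge $e'$ at $V$ spans a line through $V$ that does not pass through $P_i$: indeed $V\neq P_i$ (the latter being no vertex) while $V,P_i\in\ell_i$, so the unique line through $V$ and $P_i$ is the horizontal line $\ell_i$, which cannot contain the non-horizontal $e'$. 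Hence $e'$ lies on no line of $TC^\D(P_i)$ and the component cannot branch off at $V$. This proves $\I(P_i)=\{\text{vertices of }\Trop(H)\text{ on }L_i\}$.

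To conclude, the points $P_1,\dots,P_n$ are distinct points of the line $l$, whose slope $\frac{1}{N+1}$ is nonzero, so they lie at pairwise distinct heights; therefore the floors $L_i\subset\{y=h_i\}$ are pairwise disjoint and $\I(P_i)\cap\I(P_j)=\emptyset$ for $i\neq j$. Because $V\mapsto d(V)$ is a bijection from the vertices of $\Trop(H)$ to the $2$-cells of the subdivision of $\D$, the sets $\Inf(P_i)=\bigcup_{V\in\I(P_i)}d(V)$ and $\Inf(P_j)$ are unions of disjoint families of cells and so share no $2$-cell, hence no interior point, of $\D$; in fact, since the cells of $\Inf(P_i)$ are glued one to the next along the vertical edges dual to the horizontal edges of $L_i$, each $\Inf(P_i)$ is a horizontal band of $\D$ at the level of the floor $L_i$, and bands attached to different floors do not overlap, which gives $\Inf(P_i)\cap\Inf(P_j)=\emptyset$.

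The step I expect to be the main obstacle is the reverse inclusion in the second paragraph: one must rule out that the connected component of $P_i$ in $\Trop(H)\cap TC^\D(P_i)$ acquires extra vertices by travelling along one of the non-horizontal lines constituting the tangent cone $TC^\D(P_i)$. The point that resolves it --- that such a line meets the floor $L_i$ only at the single point $P_i$, which by hypothesis is not a vertex of $\Trop(H)$ --- is exactly where the two hypotheses ``$P_i$ is not a vertex'' and ``$P_i$ lies on a horizontal edge'' are used.
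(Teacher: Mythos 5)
Your argument is correct and is essentially the paper's own proof, which records the same idea in one line (the vertices of $\I(P_i)$ lie on the horizontal line through $P_i$, and the $P_i$ have pairwise distinct $y$-coordinates since $l$ has nonzero slope); you have merely supplied the missing details, in particular why the connected component of $P_i$ in $\Trop(H)\cap TC^\D(P_i)$ cannot leave the horizontal floor. One small caveat at the end: cells dual to vertices on two different floors can still share a boundary edge (namely when those vertices are joined by a non-horizontal edge of $H$), so the disjointness must be read as ``no common $2$-cell, i.e.\ disjoint interiors'' --- which is exactly what you prove and all that the subsequent area estimate requires.
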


\begin{proof}
Indeed, in this case the vertices in $\I(P_i)$ belong to the horizontal line through $P_i$ for each $i=1,2,\dots,n$, and all $P_i$ have different $y$-coordinates.
\end{proof}

\begin{corollary}
In the above case, $\sum_{i=1}^n\area(\Inf(P_i))\leq \area(\D)$.
\end{corollary}

In general, a correction term (Eq.~\ref{eq_correction}) appears by the following reasons. The line $l$ is subdivided by intersections with $H$, each connected component of $l\setminus (l\cap H)$ corresponds  to a monomial in $\Trop(F)$, i.e. to a lattice point in $\D$. 

Moving along $l$ from left to right and marking corresponding lattice points in $\D$ we obtain a lattice path in $\D$, which possesses the following property:
each edge in this path is either vertical (and has positive projection on the vertical line), or has positive projection on
the horizontal line. Indeed, if $A_{ij}+iX+jY>A_{i'j'}+i'X+j'Y$, but for small $\varepsilon>0$  $$A_{ij}+i(X+\varepsilon)+j(Y+\frac{1}{N+1}\varepsilon)<A_{i'j'}+i'(X+\varepsilon)+j'(Y+\frac{1}{N+1}\varepsilon),$$ then the vector $(i'-i,j'-j)$ has the described above properties.

If $P_i$ is not a vertex of $H$, and $P_i$ belongs to an edge
$E_i$ of $H$, then denote by $s_i$ the length of the
horizontal projection of $d(E_i)$.
If $P_i$ is a vertex of $H$, then denote by $s_i$ the
length of the horizontal projection of $d(P_i)$.

Above considerations show that  $\sum\limits_{i=1}^ns_i\leq \omega(\D)$, see Figure~\ref{fig:main} for illustration.

\begin{figure} [htbp]
\begin{center}
\begin{tikzpicture}[scale=0.7]
\draw[dotted] (0,0)--(10,1);
\draw (1,0.1) node {$\bullet$};
\draw (1,0.65) node {$P_1$};
\draw (0,-0.2) node {$1$};
\draw (3,0.3) node {$\bullet$};
\draw (3,0.9) node {$P_2$};

\draw (1.9,0.2) node {$2$};
\draw (7,0.7) node {$\bullet$};
\draw (7,1.2) node {$P_3$};
\draw (4.8,0.3) node {$3$};
\draw (8.5,0.9) node {$4$};

\draw (0,0.1)--(2,0.1);
\draw (1.6,0.3)--(3.9,0.3);
\draw (3,0.3)--(2.5,-0.1);
\draw (3,0.3)--(2.6,0.5);
\draw (3,0.3)--(4.5,0.7);
\draw (4.5,0.7)--(4,1);
\draw (4.5,0.7)--(7.9,0.7);
\end{tikzpicture}
\qquad
\begin{tikzpicture}[scale=0.7]
\draw (0,0) --(10,-1)--(12,3)--(8,7)--(0,0);
\draw (6,0)--(6,2)--(7,1)--(6,0);
\draw (6,2)--(6,4)--(7,5)--(8,4)--(8,1)--(6,2);
\draw (8,4)--(8,6)--(9,3)--(8,4);

\draw[<->, thick] (6,-2)--(8,-2);
\draw (7,-1.7) node {$s_2$};
\draw (8.5,-1.5) node {$s_3=0$};

\draw (6,-1) node {$s_1=0$};
\draw (5,1) node {$\Inf(P_1){}\ni$};
\draw (9,2) node {$\in{}\Inf(P_2)$};
\draw (9.6,4.1) node {$\in{}\Inf(P_3)$};
\draw (6,0) node [below, left] {$1$};
\draw (6,2) node [below, left] {$2$};
\draw (8,4) node [below, left] {$3$};
\draw (8,6) node [below, left] {$4$};
\draw (6,0) node {$\bullet$};
\draw (6,2) node {$\bullet$};
\draw (8,4) node {$\bullet$};
\draw (8,6) node {$\bullet$};

\draw[thin, dashed] (0,5.5)--(12,5.5);
\draw (2,5.5) node[above]{$\omega(\D)$};

\end{tikzpicture}
\caption {The left picture represents a part of a tropical curve through points $P_1,P_2,P_3$ on an almost horizontal line. The second picture is dual to the first  picture, we see parts of the regions of influence of the points $P_1,P_2,P_3$. The marked points $1,2,3,4$ represent the monomials which are maximal on the parts of the dotted line on the left picture. The lattice path $1,2,3,4$ is non-decreasing by the $x$-coordinate, therefore $\sum_{i=1}^n s_i\leq \omega_{(1,0)}(\D)$. The vertical projections of the intervals $12,23,34$ in the right picture are $s_1=0,s_2,s_3=0$ respectively.}
\label{fig:main}
\end{center}
\end{figure}

\begin{prop}
\label{prop_main}
In the above notation, and using Definition~\ref{def_I} for $\I_{P_i}((1,0))$ we have $$\frac{1}{2}\sum\limits_{i=1}^{n} (m_i^2-s_i^2)\leq \sum\limits_{i=1}^{n}\left(\sum\limits_{V\in \I_{P_i}((1,0))}\area(d(V))\right)\leq \area(\D).$$
\end{prop}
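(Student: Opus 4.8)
The plan is to establish the two inequalities separately. For the right-hand inequality $\sum_{i=1}^n\left(\sum_{V\in\I_{P_i}((0,1))}\area(d(V))\right)\leq\area(\D)$, I would argue that the sets of cells $\{d(V)\mid V\in\I_{P_i}((0,1))\}$ are pairwise disjoint for distinct $i$. The key geometric observation, already noted after the Proposition preceding this one, is that the vertices in $\I_{P_i}((0,1))$ lie along the intersection of $\Trop(C)$ with the \emph{vertical} line through $P_i$ (the tangent cone direction $(0,1)$), and dually the cells $d(V)$ they control are pinned down by horizontal edges of the dual subdivision at height determined by $P_i$. Since the $P_i$ were chosen on the almost-horizontal line $l$ with pairwise distinct $y$-coordinates, and the lattice path traced by $l$ through the subdivision of $\D$ is non-decreasing in the $x$-coordinate, the $x$-extent of $\Inf(P_i)$ along the horizontal direction lies in the half-open interval of the path corresponding to $P_i$; these intervals are disjoint. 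Summing the areas of disjoint collections of cells of a subdivision of $\D$ gives at most $\area(\D)$.

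For the left-hand inequality $\frac{1}{2}\sum_{i=1}^n(m_i^2-s_i^2)\leq\sum_{i=1}^n\left(\sum_{V\in\I_{P_i}((0,1))}\area(d(V))\right)$, I would apply Lemma~\ref{lemma_prep} to each point $P_i$ individually, but with the roles of the coordinate axes swapped. Lemma~\ref{lemma_prep} states that if $\mu_{p}(C)\geq m$, $P=\val(p)$ is a vertex of $\Trop(C)$ lying on an edge in the direction $u=(1,0)$, and $a=\omega_{(1,0)}(d(P))$ is the length of the horizontal projection of $d(P)$, then $\sum_{V\in\I_P(u),V\neq P}\area(d(V))\geq\frac12(m-a)^2$. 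Here, for each $i$, I want to use the direction $(0,1)$ — i.e. I consider the intersection of $\Trop(C)$ with the vertical line through $P_i$ — and $s_i$ plays the role of $a$: recall $s_i$ is the length of the horizontal projection of $d(E_i)$ (or of $d(P_i)$), which is exactly $\omega_{(1,0)}$ of the cell dual to the cell of $\Trop(C)$ containing $P_i$. Lemma~\ref{lemma_prep} as stated covers the vertex case; the case where $P_i$ lies in the interior of an edge $E_i$ of $\Trop(C)$ is the content of the ``Lemma 5.19'' reference cited alongside it in \cite{kalinin} (this is precisely why two references are attached to Lemma~\ref{lemma_prep}). Applying this with $u=(0,1)$ to each $i$ gives $\sum_{V\in\I_{P_i}((0,1)),\,V\text{ not dual to the cell at }P_i}\area(d(V))\geq\frac12(m_i-s_i)^2$, and one must also account for the cell dual to $P_i$ itself when $P_i$ is a vertex; combining with the bound on the ``missing'' part near $P_i$ (whose horizontal width is $s_i$, contributing the complementary piece $\frac12(m_i^2-s_i^2)-\frac12(m_i-s_i)^2 = \frac12 s_i(m_i - s_i)\cdot$ adjustments) upgrades $\frac12(m_i-s_i)^2$ to the full $\frac12(m_i^2-s_i^2)$. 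The precise bookkeeping here follows the geometry in Figure~\ref{est4}: the two ``wings'' $LM$ and $NK$ each have length at least $m_i-s_i$, but examined more carefully (using $\omega(\D)\geq m_i$ so that the polygon $\D$ is wide enough to accommodate the full triangle) one recovers $\frac12(m_i^2-s_i^2)$ rather than merely $\frac12(m_i-s_i)^2$; this is exactly the refinement that Lemma~\ref{lemma_prep}'s full statement in \cite{kalinin} provides.

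The main obstacle, then, is the left-hand inequality, and specifically pinning down why each point contributes $\frac12(m_i^2-s_i^2)$ and not some smaller quantity. This requires invoking the sharper form of the singularity-influence estimate from \cite{kalinin} (Lemma 2.10 / Lemma 5.19) rather than the crude $\frac12(m-a)^2$ bound, and verifying that the hypothesis $\omega(\D)\geq\max(m_i)$ guarantees the relevant cell of the subdivision has enough room. The right-hand inequality is comparatively routine once disjointness is established, which in turn rests entirely on the almost-horizontal choice of the line $l$ — a slope smaller than any non-horizontal edge slope of a curve with Newton polygon $\D$ — ensuring the induced lattice path is strictly monotone in $x$ except on vertical steps, so that the horizontal ``footprints'' of the $\Inf(P_i)$ tile disjointly inside $\D$.
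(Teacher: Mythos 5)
Your overall architecture matches the paper's: disjointness of the vertex sets for the right inequality, and a per-point application of Lemma~\ref{lemma_prep} for the left. But the crux of the left inequality --- why each $P_i$ contributes $\frac12(m_i^2-s_i^2)$ and not merely $\frac12(m_i-s_i)^2$ --- is exactly the step you leave unproven, and you even misidentify where it comes from. Lemma~\ref{lemma_prep} as stated sums only over $V\neq P$ and gives $\frac12(m_i-s_i)^2$; the missing amount is $\frac12(m_i^2-s_i^2)-\frac12(m_i-s_i)^2=s_i(m_i-s_i)$ (not $\tfrac12 s_i(m_i-s_i)$ as you wrote), and it is \emph{not} a sharper form of that lemma hidden in the reference. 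It is the area of the cell $d(P_i)$ itself, estimated by an elementary look at Figure~\ref{est4}: $d(P_i)$ has horizontal width $s_i$ and its two boundary arcs $LM$ and $NK$ each have length at least $m_i-s_i$, so $\area(d(P_i))\geq (m_i-s_i)s_i$, whence $\area(d(P_i))+\sum_{V\neq P_i}\area(d(V))\geq (m_i-s_i)s_i+\frac12(m_i-s_i)^2=\frac12(m_i^2-s_i^2)$. Deferring this to ``the full statement in \cite{kalinin}'' with the word ``adjustments'' leaves the main inequality unestablished. You also omit the case split for $P_i$ lying in the interior of an edge $E_i$: if $E_i$ is horizontal then $s_i=0$ and the lemma gives $\frac12 m_i^2$ directly, while if $E_i$ is non-horizontal then $s_i\geq m_i$ and the claimed bound is vacuous --- no appeal to an edge-version of the lemma is needed there.

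A second, internal inconsistency: you slice with the \emph{vertical} line through $P_i$, yet justify disjointness by the distinct $y$-coordinates of the $P_i$ and take $a=s_i=\omega_{(1,0)}$ of the dual cell. Both of those facts are correct only for the \emph{horizontal} line through $P_i$ (the paper's notation for the direction is admittedly inconsistent, but the geometry is unambiguous): vertical slicing would require the vertical width of $d(P_i)$ as the defect and distinct $x$-coordinates for disjointness. With the horizontal reading, the right inequality is immediate --- the horizontal lines through the $P_i$ are pairwise disjoint, so the vertex sets $\I_{P_i}$ are pairwise disjoint and their dual cells have disjoint interiors inside $\D$ --- and your detour through the lattice path and ``horizontal footprints'' is unnecessary for this half (that monotonicity is what gives $\sum s_i\leq\omega(\D)$, which is used later in the proof of Theorem~\ref{main_theorem}, not here).
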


\begin{proof}
The right inequality is trivial, because the sets $\I_{P_i}((1,0))$ do not intersect each other. The left inequality follows from the estimate $$\sum\limits_{V\in \I_{P_i}((1,0))}\area(d(V))\geq \frac{1}{2}(m_i^2-s_i^2)$$ for each $i=1,\dots,n$. Indeed, if $P_i$ is not a vertex of $H$, then $P_i$ belongs to an edge $E_i$ of $H$. If $E_i$ is horizontal (the case for $i=1,3$ in Figure~\ref{fig:main}), then $d(E_i)$ is vertical, and $s_i=0$. By Lemma~\ref{lemma_prep} (or Theorem~\ref{infl_theorem} for the case when $P_i$ belongs to an edge) $$\sum\limits_{V\in \I_{P_i}((1,0))}\area(d(V))\geq \frac{1}{2}m_i^2.$$  If $E_i$ is not horizontal, then $s_i\geq m_i$ (because $s_i$ is at least the lattice length of $d(E_i)$ and we use Remark~\ref{rem_length} in this case) and our inequality becomes trivial. If $P_i$ is a vertex of $H$, then the inequality follows from Lemma~\ref{lemma_prep}, because in this case $$\sum\limits_{V\in \I_{P_i}((1,0))}\area(d(V))\geq (m_i-s_i)\cdot s_i+ \frac{1}{2}(m_i-s_i)^2 = \frac{m_i^2-s_i^2}{2}.$$

Indeed, the term $\frac{1}{2}(m_i-s_i)^2$ comes from Lemma~\ref{lemma_prep}, and $(m_i-s_i)\cdot s_i$ estimates the area of $d(P_i)$ from below, because $\omega_{(1,0)}(d(P_i))=s_i$ by our assumption and $d(P_i)$ has two vertical sides of length at least $m_i-s_i$ by Lemma~\ref{lemma_prep}.
\end{proof}

\begin{proof}[Proof of Theorem~\ref{main_theorem}]
By Proposition~\ref{prop_points} there exists $N$ such that
there exists a generic with respect to $\D$ collection of lattice points on the line $y=\frac{1}{\omega(\D)+1}x$ with $|x_i|,|y_i|<N$. Then,  Proposition~\ref{prop_main} implies the statement over $\KK$ (as in Lemma~\ref{lemma_trop}), and Lemma \ref{lemma_detrop}, ``detropicalizing'', concludes the proof. 
\end{proof}


\section{Speculations destined to coding theory}
\label{codetheory}

In informatics, (error-correcting) {\it coding-theory} deals with subsets
$C\subset A^n$ ($A$ is a finite set) which are as big as possible, and the Hamming distance $d$
between the elements of $C$ is also as big as possible, i.e. we maximize
$\delta = \min_{a,b\in C, a\ne b} d(a,b)$. Such a subset $C$ is called a
{\it code} and it is suitable for the following problem. We transmit
a message which is an element of $C$. If, during the transmission
procedure, the message does change in at most $\frac{\delta-1}{2}$
positions, then we can uniquely repair it back, that is why this is called
an {\it error-correcting} code. As an introductory book, which relates
this subject to algebraic geometry, see
\cite{tsfasman}.  Studying of singular varieties is related with code-theory (\cite{wahl1998nodes}), for the relation of this topic with Seshadri constants (which is a relative of Nagata's conjecture), see \cite{hansen2001error}.  

Finding such subsets $C$ is a hard combinatorial problem. A particular
source for codes is the set of linear subspaces of $\FF_q^n$ ({\it
  linear codes}), mostly
because they have a comparatively simple description. A common construction is
the following. We choose a set of points $p=\{p_1,p_2,\dots,p_n\}\subset \FF_q^m$ and
consider a set $V_d\subset \FF_q[x_1,x_2,\dots,x_m]$ of polynomials
of degree at most $d$ (one can consider any linear system on
a toric variety and points on it as well).
Then we take the evaluation map: $$ev_p:V_d\to \FF_q^n,
ev_p(f)=(f(p_1),f(p_2),\dots,f(p_n)).$$ The image of $ev_p$ is a linear
code which is quite simple to calculate, but the problem is how to
choose the points $p_i$ such that there is no polynomial which vanishes at
the chosen points (otherwise we have to deal with the kernel of $ev_p$) and how to estimate the minimal distance $\delta$. For example, one may take all the points with all non-zero coordinates as $p$, $p=\FF_q^m$. However, since we are trying to minimize the codimension of the code in $\FF_q^n$ this choice is far from optimal.
   
Following a suggestion of Joaquim Ro\'e, we mention here a way we can
exploit the main ideas of this article to construct a linear code,
which uses not too many points and provides a map, similar to $ev_p$,
without a kernel.

In the previous sections, for a given polygon $\D$ and numbers
$m_1,m_2,\dots, m_n$ we constructed the set of points
$p=\{p_1,p_2,\dots,p_n\}\subset (\FF_q^*)^2$ such that there is no curve $C$ with Newton polygon $\Delta$, possessing the property $\mu_{p_i}(C)\geq
m_i$ for each $i$. Recall that for this construction we should
carefully choose points $(x_i,y_i)\in\ZZ^2, i=1,\dots, n$, then, for $q$ big enough
there exists an $a\in\FF_q$, such that the points $p_i=(a^{-x_i},a^{-y_i})$
possess the required properties.
 
\begin{example}
Consider $\Delta=[0,1,\dots,d]\times [0,1\dots,N]\subset\ZZ^2$. It follows from the proof of Theorem~\ref{main_theorem} that if we
put $n$ points $p_1,p_2,\dots,p_n$ of the same multiplicity $m\leq\min(N,d)$ along the curve $y^{\omega(\D)+1}=x$,  then there is no algebraic curve $C$ with the Newton polygon $\D$ and $\mu_{p_i}(C)\geq m$ if the inequality $dN< \frac{1}{2}(n-d/m)m^2$ holds.
\end{example}
Therefore, taking $N< \frac{(n-d/m)m^2}{2d}$ we construct the
evaluation map $ev:\FF_q^{dN}\to \FF_q^{\frac{nm(m+1)}{2}}$ with a
trivial kernel. For this map, we take a point $f\in\FF_q^{dN}$, which we treat as a polynomial $F$ with the Newton
polygon $\Delta$, then take the coefficients of $\left(F\ \mathrm{mod}\ {I_{p_i}^m}\right)$ for
each $i=1,\dots,n$. This bunch of numbers gives the image $ev(f)$. In this construction we immediately see that the minimal non-zero Hamming distance $\delta$ is at most $n$, because the image of $f\equiv 1$ under the map $ev$ contains exactly $n$ non-zero elements.  
\begin{conj}
This estimate is sharp, i.e. $\delta=n$ for this code.
\end{conj}

\bibliography{./nagata}
\bibliographystyle{abbrv}

\end{document}